\newcommand{\cA}{\mathcal{A}}
\newcommand{\cD}{\mathcal{D}}
\newcommand{\cH}{\mathcal{H}}
\renewcommand{\S}{\mathbb{S}}
\renewcommand{\H}{\mathbb{H}}
\newcommand{\R}{\mathbb{R}}
\newcommand{\C}{\mathbb{C}}
\renewcommand{\Re}{\operatorname{Re}}
\renewcommand{\Im}{\operatorname{Im}}
\newcommand{\argmin}{\mathrm{argmin}}
\newcommand{\tr}{\mathrm{Tr}}
\newcommand{\CVX}{\textsc{CVX}\xspace}
\newcommand{\Manopt}{\textsc{Manopt}\xspace}
\newtheorem{lemma}{Lemma}
\newtheorem{proposition}{Proposition}
\title{Solving cluster moment relaxation with hierarchical matrix}
\date{\today}
\author[1]{Yi Wang*}
\author[2]{Rizheng Huang\footnote{Equal contributions.}}
\author[3]{Yuehaw Khoo}
\affil[1]{Department of Statistics, University of Chicago}
\affil[2]{Committee on Computational and Applied Mathematics, University of Chicago}
\affil[3]{Department of Statistics and Committee on Computational and Applied Mathematics, University of Chicago}
\begin{document}
	\maketitle
	
	\begin{abstract}
		Convex relaxation methods are powerful tools for studying the lowest energy of many-body problems. By relaxing the representability conditions for marginals to a set of local constraints, along with a global semidefinite constraint, a polynomial-time solvable semidefinite program (SDP) that provides a lower bound for the energy can be derived. In this paper, we propose accelerating the solution of such an SDP relaxation by imposing a hierarchical structure on the positive semidefinite (PSD) primal and dual variables. Furthermore, these matrices can be updated efficiently using the algebra of the compressed representations within an augmented Lagrangian method. We achieve quadratic and even near-linear time per-iteration complexity. Through experimentation on the quantum transverse field Ising model, we showcase the capability of our approach to provide a sufficiently accurate lower bound for the exact ground-state energy.\end{abstract}
	
	\section{Introduction}
	
	Determining the lowest energy state of a many-body system is one of the most fundamental problems in science and engineering. This type of problem arises in the study of the Ising model \cite{cipra1987introduction}, graphical modeling \cite{wainwright2008graphical}, sensor network localization \cite{mao2007wireless}, and the structure from motion problem \cite{ozyesil2017survey}, to name a few. In these problems, one is usually concerned with minimizing an energy function  $\mathcal{E}$. With the exception of some simple cases, the energy landscape of $\mathcal{E}$ is plagued with spurious local minima. Without loss of generality, one can recast the problem of minimizing $\mathcal{E}$ as another equivalent minimization problem \cite{lasserre2001global}
	\begin{equation}\label{eq:measure optimization}
	E_0 = \inf_{\mu\in \mathcal{P}(\mathcal{S})} \int \mathcal{E}(x)\mu(dx)
	\end{equation}
	on the space of measure over a set $\mathcal{S}$, denoted as $\mathcal{P}(\mathcal{S})$. By moving to the space of measure, one effectively obtains a linear optimization problem that circumvents the non-convexity issue in minimizing $\mathcal{E}$, at the expense of dealing with the high-dimensional measure $\mu$. Generically, the solution of \eqref{eq:measure optimization} is an extreme point of $\mathcal{P}(\mathcal{S})$ which is a Dirac measure, and the support of such measure gives a minimizer of $\mathcal{E}$. Such a view is adopted in \cite{lasserre2001global,parrilo2003semidefinite} when devising a moment-based convex program for the case when $\mathcal{E}$ is a low-degree polynomial.

	There is an analogous problem in quantum many-body physics, where the ground-state energy minimization problem
	\begin{equation}\label{eq:measure quantum}
	E_0 = \min_{\rho} \; \tr(\hat{H}\rho), \quad \text{s.t. } \rho \ \text{is positive semidefinite}, \ \tr(\rho) = 1, 
	\end{equation}
	is commonly solved (for example the quantum Ising and Hubbard model \cite{altland2010condensed}). Here, $ \rho$ is a \emph{density operator} subject to certain constraints, and $\hat{H}$ is a high-dimensional Hamiltonian operator capturing the interactions between $d$-sites \cite{altland2010condensed}. The difficulty of solving \eqref{eq:measure quantum} is that the matrix $\rho$ scales exponentially as the number of bodies grows.
	
	\subsection{Prior works}
	The issue with measure or density operator minimization is that minimizing these high-dimensional objects is prohibitively expensive due to the curse of dimensionality. Therefore, instead of working with the high-dimensional measure or density operator, approaches based on moments have been proposed to solve \eqref{eq:measure optimization} and \eqref{eq:measure quantum} without the curse of dimensionality. \cite{lasserre2001global} proposes the use of moments to solve \eqref{eq:measure optimization}:
	\begin{equation}\label{eq:classic moment}
	m_{\alpha} := \mathbb{E}_\mu(x^\alpha),\quad x = (x_1,\ldots,x_d),\quad \alpha = (\alpha_1,\ldots,\alpha_d),
	\end{equation}
	where $\alpha\in \{0\cup \mathbb{N}\}^d$ is a multi-index, and $x^\alpha := x_1^{\alpha_1}\ldots x_d^{\alpha_d}$. A \emph{convex relaxation} is applied to  the space of $\{m_\alpha\}_\alpha$, where an outer approximation to the set of valid moments is given by a convex semidefinite program (SDP). Suppose we place the limit $\sum_i \alpha_i \leq q$ on the degree of the moments, we have a convex problem in terms of ${d +q \choose q}$ moments. Although by increasing $q$, the truncation threshold for the moments, one can improve the solution quality or even exactly recover the minimizer of the polynomial $p$ \cite{nie2014optimality}, for most practical situations, one can only use $q=2$ due to a large $d$.

	An analogous fermionic quantum mechanical version of the moment-based relaxation is detailed in \cite{2rdm_1,2rdm_3}. There, one deals with quantum moments of the form
	\begin{equation}
	m_\alpha := \text{Tr}(O^\alpha \rho),\quad O=(O_1,\ldots,O_d) 
	\end{equation}
	where $O^\alpha := O_1^{\alpha_1}\ldots O_d^{\alpha_d}$. Again, such a method can scale badly with $d$, constraining its application to small systems.

	To improve the scaling of these methods, recently, cluster moments/marginals semidefinite programming relaxations have been proposed, both for minimizing classical \cite{an1988note,pelizzola2005cluster,wainwright2008graphical} and quantum energies  \cite{variational_embedding_qmb, variational_embedding_pd_algorithm}. The general idea is that one first clusters the variables/operators, and only forms higher-order moments for intra-cluster variables/operators. This significantly lowers the number of decision variables involved, resulting in $O(d)\times O(d)$ type scaling. 
	
	In this paper, we adopt a strategy similar to the variational embedding method \cite{variational_embedding_qmb}, where we try to determine local cluster moments and combine them through a global PSD constraint. The difference is that the local cluster density matrices are represented through their moments. In this case, the decision variable is a PSD moment matrix. The main point of this paper is to propose a method to accelerate the PSD optimization problem. Typically, the most computationally expensive step in such an optimization problem is the projection onto the PSD cone, which scales cubically. In \cite{variational_embedding_pd_algorithm}, translation invariance of the Hamiltonian is exploited to diagonalize the PSD matrix in the Fourier basis with linear time complexity. However, it is unclear how such computational scaling can be achieved for general systems.

	\subsection{Our contributions}
	We propose a convex relaxation, the \emph{cluster moment relaxation}, to solve the energy minimization problem in both classical and quantum settings. Furthermore, we introduce a specific form of hierarchical matrices, which differs from the conventional definition, to represent the primal and dual variables of the proposed semidefinite relaxation. 
	
	The key point is that the constraints of the proposed relaxation can be enforced efficiently using the algebra of hierarchical matrices. Within an augmented Lagrangian method, with a hierarchical dual PSD variable, the optimization can be carried out with quadratic per-iteration complexity. Additionally, if one assumes the primal moment matrix also takes the form of a hierarchical PSD matrix, near-linear per-iteration complexity can be achieved.

	\subsection{Organizations}
	In Section~\ref{section:cluster moment relaxation}, we detail a  convex relaxation framework to solve energy minimization problems. In Section~\ref{section:ALM}, we review the augmented Lagrangian method (ALM) for solving the proposed convex program. In Section~\ref{section:hierarcical dual ALM} and \ref{section:hierarchical primal and dual ALM}, we propose the use of hierarchical matrices to accelerate the ALM. In Section~\ref{section:numerics}, we demonstrate the efficacy of the method for a quantum spin model.  
	
	\subsection{Notations}
	We use $I_n$ to denote the identity matrix of size $n \times n$. Additionally, we use $0_{m\times n}$ to denote a zero matrix of size $m \times n$, and when the context is clear, we will omit $m$ and $n$. Furthermore, let $\S^n$ be the space of real symmetric matrices of size $n \times n$, and let $\S^n_{+}$ be the positive semidefinite matrices in $\S^n$. Similarly, let $\H^n$ be the space of Hermitian matrices of size $n \times n$, and let $\H^n_{+}$ be the positive semidefinite matrices in $\H^n.$ For any matrix $X$ in $\S^n$ or $\H^n$, we may also use $X \succeq 0$ to denote that $X$ is positive semidefinite.
	
	When discussing a matrix $A$, the notation $A(p,q)$ refers to its $(p,q)$-th entry. In a block matrix $A$, $A_{ij}$ represents its $(i,j)$-th block. We may occasionally use $A_{ij}(p,q)$ to denote the $(p,q)$-th entry of the $(i,j)$-th block. For a complex-valued matrix $A$, $\Re(A)$ and $\Im(A)$ denote its real and imaginary parts, respectively. For a linear operator $\cA$ on matrices or vectors, its adjoint is denoted by $\cA^*$. Lastly, for any positive integer $N$, we use $[N]$ to denote the set $\{1,2,\cdots,N-1,N\}$.

	\section{Proposed convex relaxations}\label{section:cluster moment relaxation}
	
	While there are many versions of the cluster moments/marginals approach to obtain a convex relaxation of the energy minimization problems, in this paper, we examine a specific kind that only has equality constraints besides a global positive semidefinite constraint. As we shall see, this formulation can be optimized efficiently by our proposed method. The convex relaxation is constructed out of the following ingredients: 
	\begin{enumerate}
		\item \textbf{Cluster basis}: We first form monomials of variables/operators and cluster them into $K$ different groups $\mathcal{C}_i\subset \{0\cup \mathbb{N}\}^d, i\in [K]$. Each $\mathcal{C}_i$ is assumed to have $C$ elements, i.e. $\vert C_i \vert = C$. These clusters of monomials are called the \emph{cluster basis}:
		\begin{itemize}
			\item \textbf{Classical}: $\mathbf{v}= [\mathbf{v}_i]_{i\in[K]}$, $\mathbf{v}_i = [x^{\alpha}]_{\alpha\in \mathcal{C}_i}$.
			\item \textbf{Quantum}: $\mathbf{v} = [\mathbf{v}_i]_{i\in[K]}$, $\mathbf{v}_i = [O^{\alpha}]_{\alpha\in \mathcal{C}_i}$.
		\end{itemize}
		\item \textbf{Product cluster basis}: We then take the cluster basis and form their products as follows:
		\begin{itemize}
			\item \textbf{Classical}: $\mathbf{v}\mathbf{v}^*$, and $\mathbf{v}_i\mathbf{v}_j^* = [x^\alpha {x^\beta}^*]_{\alpha\in \mathcal{C}_i,\beta\in \mathcal{C}_j}$.
			\item \textbf{Quantum}: $\mathbf{v}\mathbf{v}^*$, and $\mathbf{v}_i\mathbf{v}_j^* = [O^\alpha {O^\beta}^*]_{\alpha\in \mathcal{C}_i,\beta\in \mathcal{C}_j}$.
		\end{itemize}
		\item \textbf{Intra-cluster relationship}: The cluster basis and the product of the basis elements within the same cluster satisfy the following linear constraint for $j \in [K]$:
		\begin{itemize}
			\item $\mathcal{D}_U(\mathbf{v}_j)+\mathcal{D}_L({\mathbf{v}_j}^*)+\mathcal{D}(\mathbf{v}_j\mathbf{v}_j^*) = \mathbf{z}$. $\mathbf{z}$ is a vector of scalars (operators) in the classical (quantum) case.
		\end{itemize}
		\item \textbf{Inter-cluster relationship}: The products of the basis elements, $\mathbf{v}_i \mathbf{v}_j^*$ and $\mathbf{v}_j \mathbf{v}_i^*$, for $i< j, i,j\in[K],$ satisfy the following relationship:
		\begin{itemize}
			\item $\mathcal{A}_U(\mathbf{v}_i\mathbf{v}^*_j)+\mathcal{A}_L(\mathbf{v}_j\mathbf{v}^*_i) = \mathbf{w}$. $\mathbf{w}$ is a vector of scalars (operators) in the classical (quantum) case.
		\end{itemize}
	\end{enumerate}

	The meaning of item 3 and 4 will be illustrated through an example in the next subsection. We now use these four ingredients to provide a convex relaxation for the         energy minimization problem in terms of the moment matrix $M \in 
        \mathbb{C}^{(CK+1) \times (CK +1)}$
	\begin{equation}\label{eq:moments definition}
	\text{Classical}: M = \mathbb{E}_\mu \left(\begin{bmatrix}\mathbf{v}\\ 1\end{bmatrix}\begin{bmatrix}\mathbf{v}^* & 1\end{bmatrix}\right),\quad \text{Quantum}: M = \mathbb{E}_\rho \left(\begin{bmatrix}\mathbf{v}\\ I\end{bmatrix}\begin{bmatrix}\mathbf{v}^* & I\end{bmatrix} \right).
	\end{equation}
	Here, the expectations are taken entry-wise and defined as  $\mathbb{E}_\mu(\cdot) := \int (\cdot) \mu(dx)$ and  $\mathbb{E}_\rho(\cdot) := \langle \cdot, \rho\rangle$. In what follows, for any matrix $A\in \mathbb{C}^{(CK+1) \times (CK +1)}$, we partition $A$ into
	\begin{equation}\label{A_matrix_split_moments}
	A := \begin{bmatrix} A^{(2)} & A^{(1)} \\ {A^{(1)}}^* & A^{(0)}\end{bmatrix},\quad  A^{(2)}\in \mathbb{C}^{CK \times CK},\ A^{(1)}\in \mathbb{C}^{CK \times 1},\ A^{(0)}\in\mathbb{C}.
	\end{equation}
	We often also write 
	\begin{equation}\label{eq:A block form}
	A^{(2)} = [A^{(2)}_{ij}]_{i=1,j=1}^K,\quad A^{(1)}=[A_i^{(1)}]_{i=1}^K
	\end{equation}
	where each $A^{(2)}_{ij}$ is a $C\times C$ block, and each $A_i^{(1)}$ is a $C\times 1$ vector. For example, $M^{(2) } = \mathbb{E}_\mu(\mathbf{v}\mathbf{v}^*)$ or $M^{(2)} = \mathbb{E}_\rho(\mathbf{v}\mathbf{v}^*)$.

	These ingredients give a set of necessary conditions on the moment matrix:
	\begin{enumerate}
		\item The inter-cluster relationship gives $\mathcal{A}_U(M^{(2)}_{ij})+\mathcal{A}_L(M^{(2)}_{ji}) = w$ for $i< j, i,j\in [K]$, where $w:=\mathbb{E}_{\mu,\rho}(\mathbf{w})$. 
		\item 
		The intra-cluster relationship gives $\mathcal{D}_U(M_j^{(1)})+\mathcal{D}_L({M_j^{(1)}}^*)+ \mathcal{D}(M_{jj}^{(2)}) = z$ for $ j\in[K]$, where $z:=\mathbb{E}_{\mu,\rho}(\mathbf{z})$. Furthermore, $M^{(0)} = 1$, since $\mathbb{E}_{\mu,\rho}(1) = 1$. 
		\item $M\succeq 0$.
	\end{enumerate}
	For convenience, we assume $w$ and $z$ are real-valued vectors of sizes $P_1$ and $P_2$, respectively. With these necessary conditions, an energy minimization problem can be relaxed into a convex problem as follows:
	\begin{eqnarray}
	(\textbf{P})&\ & \min_{M} \; \text{Tr}(JM)\label{eq:general moment relaxation}\cr
	\text{s.t.} &\ &\Lambda_{ij}\in \mathbb{R}^{P_1 \times 1} \hspace{0.5mm}:\mathcal{A}_U(M^{(2)}_{ij})+\mathcal{A}_L(M^{(2)}_{ji}) = w, \ i< j, \ i,j\in [K],\cr 
	&\ & \lambda_{j}\in \mathbb{R}^{P_2 \times 1} \hspace{1.8mm}: \mathcal{D}_U(M_j^{(1)})+\mathcal{D}_L({M_j^{(1)}}^*)+ \mathcal{D}(M_{jj}^{(2)}) = z, \ j\in[K],\cr
	&\ &\gamma\in \mathbb{R} \hspace{10.25mm} : M^{(0)} = 1,\cr
	&\ &S\in \mathbb{H}_+^{CK+1} \hspace{1.3mm}: M\in \mathbb{H}_+^{CK+1}.
	\end{eqnarray}
	$J$ is a cost matrix that encodes the information of $\mathcal{\epsilon}$ and $\hat H$ in \eqref{eq:measure optimization} and  \eqref{eq:measure quantum} respectively. The precise meaning of the terms in \eqref{eq:general moment relaxation} such as $J,w,z$ are illustrated in the next subsection through the example of a quantum spin model. 
	
	\subsection{Example of quantum energy minimization}\label{sec:quantum energy minimization}

	In physics, it is often the case that we have Hamiltonians with only pairwise interactions, i.e., the loss function in \eqref{eq:measure quantum} is equal to 
	\begin{equation} \label{gs_energy_density_formulation1}
	\sum_{1 \leq i \leq N} \tr(\hat{H}_i \rho) + \sum_{1 \leq i<j \leq N} \tr(\hat{H}_{ij} \rho).
	\end{equation}
	where $\hat H_i$'s and $\hat H_{ij}$'s are effectively some one-variable and two-variable operators.
	
	One example that we study in this paper is the quantum spin-$\frac{1}{2}$ system. The basic building blocks of these Hamiltonians are the Pauli matrices and the $2$-dimensional identity matrix:
	\begin{equation} \label{Pauli-matrices}
	\sigma^x = \begin{pmatrix}
	0 & 1 \\
	1 & 0
	\end{pmatrix}, \quad
	\sigma^y = \begin{pmatrix}
	0 & -i \\
	i & 0   
	\end{pmatrix}, \quad
	\sigma^z = \begin{pmatrix}
	1 & 0 \\
	0 & -1
	\end{pmatrix}, \quad
	I_2 = \begin{pmatrix}
	1 & 0 \\
	0 & 1
	\end{pmatrix}.
	\end{equation}
	These matrices form a basis for the real vector space of Hermitian operators on $\mathbb{C}^2$. Furthermore, let $\sigma_i^{x/y/z}$ denote the operator obtained by tensoring $\sigma^{x/y/z}$ on the $i$-th site with identities $I_2$ on all other sites, i.e.,
	\begin{equation} \label{Pauli_matrices_Nsites}
	\sigma_i^{\alpha} := I_2^{\otimes (i-1)} \otimes \sigma^{\alpha} \otimes I_2^{\otimes (N-i)}, \ \alpha \in \{x,y,z\}, 
	\end{equation}
	and let $I:= I_2^{\otimes N}$. In this case, all $\hat{H}_i$ and $\hat{H}_{ij}$ can be written as:
	\begin{align*}
	& \hat{H}_i = \sum_{\alpha \in \{x,y,z\}} a^{\alpha}_i \sigma_i^{\alpha}, \quad  1\leq i \leq N, \\
	& \hat{H}_{ij} = \sum_{\alpha, \beta \in \{x,y,z\}} a^{\alpha \beta}_{ij} \sigma_i^{\alpha} \sigma_j^{\beta}, \quad 1 \leq i < j \leq N,
	\end{align*}
	for some real constants $\{a_i^{\alpha}\}_{i, \alpha}$ and $\{a_{ij}^{\alpha \beta}\}_{ij, \alpha \beta}.$

	For example, the celebrated $1$-D transverse field Ising (TFI) model \cite{tfi}, a spin-$\frac{1}{2}$ quantum model, is defined by its Hamiltonian:
	\begin{gather}\label{Hamiltonian_TFI}
	\hat{H}_{\text{TFI}} = -h \sum_{i=1}^N \sigma_i^x -\sum_{i=1}^N \sigma_i^z \sigma_{i+1}^z,
	\end{gather}
	where the TFI model is assumed to have periodic boundary conditions, i.e., $\sigma_{N+1}^{x/y/z}$ should be identified with $\sigma_1^{x/y/z}$, and $h \in \R$ is a scalar parameter controlling the strength of the external magnetic field along the $x$ axis.

	\subsubsection{Cluster moment relaxation for the TFI}\label{section:TFI constraints}
	
	We go through the four ingredients needed to construct a cluster moment relaxation. For an $N$-spin system, we define 
	\begin{equation} \label{def_vn}
	\textcolor{blue}{\mathbf{v} := [\mathbf{v}_i]_{i=1}^N, \quad \mathbf{v}_i = \begin{bmatrix} {\sigma_i^x}^* & {\sigma_i^y}^* & {\sigma_i^z}^* \end{bmatrix}^*,}
	\end{equation}
	  and $\mathbf{v}$ is a vector of operators of length $3N$. In this case, $K=N$ and $C=3$. Then we let $M$ of size $(3N+1) \times (3N+1)$ be defined by
	\begin{equation}
	M^{(2)}_{ij} =  \tr(\mathbf{v}_i {\mathbf{v}_j}^* \rho),\quad  M^{(1)}_i =  \tr(\mathbf{v}_i\rho).
	\end{equation}
	The properties of Pauli operators give rise to inter-cluster and intra-cluster constraints, which are summarized in the second column of Table~\ref{Table_linear_constraint_M}.
	\begin{table}[h]
		\centering
		\begin{tabular}{ |c|c| } 
			\hline
			& Relations between the operators  \\ 
			\hline
			\makecell{Inter-cluster \\ relationship} &\makecell{\rule{0pt}{3ex} \vspace{-0.1em} $\sigma_i^{\alpha} {\sigma_j^{\beta}}^*= {\sigma_j^{\beta}} {\sigma_i^{\alpha}}^*$, \\ $\alpha,\beta  \in \{x,y,z\},$ $i\neq j\in[N]$  \vspace{0.2em}}   \\   
			\hline
			\makecell{Intra-cluster \\ constraints 1} &\makecell{\rule{0pt}{3ex} \vspace{-0.1em} $\sigma_j^{\alpha} = {\sigma_j^{\alpha}}^*$, \\ $\alpha \in \{x,y,z\},$ $j\in[N]$ \vspace{0.2em}}   \\   
			\hline 
			\makecell{Intra-cluster \\ constraints 2}& \makecell{\rule{0pt}{3ex} \vspace{-0.2em} $\sigma_j^\alpha {\sigma_j^\beta}^* = i\sigma_j^\gamma $,  \\ $(\alpha,\beta,\gamma)=(x,y,z),(y,z,x),(z,x,y), j\in [N]$ \vspace{0.2em}}  \\
			\hline 
			\makecell{Intra-cluster \\ constraints 3} &\makecell{\rule{0pt}{3ex}$\sigma_j^{\alpha} {\sigma_j^{\alpha}}^*= I$, \\
				$ \alpha \in \{x,y,z\},  j\in [N]$}   \\
			\hline
		\end{tabular}
		\caption{Details of the basis property and inter/intra-cluster constraints}
		\label{Table_linear_constraint_M}
	\end{table}
	These relationships between operators then give constraints on the moment matrix
	\begin{enumerate}
		\item $i< j, \ \ i,j\in [K],$
            \begin{equation} \label{constraints:1}
                \mathcal{A}_U(M^{(2)}_{ij})+\mathcal{A}_L(M^{(2)}_{ji}) = w \Rightarrow \Im(M^{(2)}_{ij}) = 0.
            \end{equation} 
		\item $j\in[K], \ \  \mathcal{D}_U(M_j^{(1)})+\mathcal{D}_L({M_j^{(1)}}^*)+ \mathcal{D}(M_{jj}^{(2)}) = z  \Rightarrow$  
		\begin{eqnarray} \label{constraints:2}
		\Im(M_j^{(1)}) &=& 0 \cr 
		M^{(2)}_{jj} - \text{diag}(\{M^{(2)}_{jj}(k,k)\}^3_{k=1}) &=& \begin{bmatrix} 0 & iM^{(1)}_{j}(3)  & -iM^{(1)}_{j}(2) \\ -iM^{(1)}_{j}(3) & 0 & iM^{(1)}_{j}(1) \\ iM^{(1)}_{j}(2) & -iM^{(1)}_{j}(1) & 0\end{bmatrix}\cr 
		M^{(2)}_{jj}(k,k) &=& 1,\ k=1,2,3.\cr 
		\end{eqnarray}
	\end{enumerate}

	For the precise definitions of $\mathcal{A}_U$, $\mathcal{A}_L$, $\mathcal{D}_U$, $\mathcal{D}_L$, $\mathcal{D}$, and  $J,w,z$, we refer readers to the appendix.

	\section{Standard augmented Lagrangian method}\label{section:ALM}
	
	In this section, we first derive a standard ALM \cite{alm_intro, alm1, alm2} to solve \eqref{eq:general moment relaxation 2}, which serves as a foundation to discuss algorithms with reduced complexities in subsequent sections. The dual problem of (\textbf{P}) can be written as
	\begin{eqnarray}
	(\textbf{D})& & \underset{\substack{\{\Lambda_{ij}\}_{i<j},\{\lambda_j\}_j,\\ \gamma,S\in\mathbb{H}_+^{CK+1}}}{\max}\ \sum_{i<j\leq K} \langle \Lambda_{ij},  w \rangle+\sum_{j\in[K]} \langle \lambda_j, z\rangle + \gamma\label{eq:general moment relaxation 2}\cr
	\text{s.t.}&\ & J- \mathcal{F}^*(\{\Lambda_{ij}\}_{i<j},\{\lambda_j\}_j,\gamma)  = S
	\end{eqnarray}
	where 
	\begin{equation}\label{eq:simplified dual equality}
	\mathcal{F}^*(\{\Lambda_{ij}\}_{i<j},\{\lambda_j\}_j,\gamma)  := \begin{bmatrix}  \mathcal{D}^*(\lambda_1) & \mathcal{A}^*_U(\Lambda_{12}) & \cdots &\mathcal{A}^*_U(\Lambda_{1K}) &  & \mathcal{D}_U^*(\lambda_1) \\ \mathcal{A}^*_L(\Lambda_{12}) & \mathcal{D}^*(\lambda_2) & \ddots &  & & \vdots \\ \vdots & \ddots & \ddots & \mathcal{A}^*_U(\Lambda_{(K-1)K})& & \\ \mathcal{A}^*_L(\Lambda_{1K}) &\cdots & \mathcal{A}^*_L(\Lambda_{(K-1)K}) & \mathcal{D}^*(\lambda_K)  & & \mathcal{D}_U^*(\lambda_K) \\ \mathcal{D}_L^*(\lambda_1) & \cdots & & \mathcal{D}_L^*(\lambda_K) &  & \gamma \end{bmatrix}
	\end{equation} is hermitian so we have $\cA_U^*(\Lambda_{ij})=\left(\cA_L^*(\Lambda_{ij})\right)^*$ for $i<j$, and $\mathcal{D}^*(\lambda_j)=\left(\mathcal{D}^*(\lambda_j)\right)^*,\,\mathcal{D}_U^*(\lambda_j)=\left(\mathcal{D}_L^*(\lambda_j)\right)^*$ for $j\in[K]$.

	The dual problem (\textbf{D}) admits an augmented Lagrangian of the form
	\begin{multline} \label{augmented_lagrangian_dual_original_formulation}
	L_{\sigma} (S,\{\Lambda_{ij}\}_{i<j}, \{\lambda_j\}_j,\gamma; M) := -\sum_{i<j\leq K} \langle \Lambda_{ij}, w \rangle -\sum_{j\in[K]} \langle \lambda_j, z\rangle - \gamma \cr +  \frac{\sigma}{2} \left\|J - 
	\mathcal{F}^*(\{\Lambda_{ij}\}_{i<j},\{\lambda_j\}_j,\gamma)  - S-\sigma^{-1} M\right\|_F^2 - \frac{\|M\|_F^2}{2\sigma}
	\end{multline}
	with a penalty parameter $\sigma > 0$. Then, the ALM algorithm for solving (\textbf{D}) is summarized in Algorithm \ref{dual_problem_original_admm_algorithm}. 
	\begin{algorithm}
		\caption{ALM for the dual problem}\label{dual_problem_original_admm_algorithm}
		\begin{algorithmic}[1]
			\Require $S, \{\Lambda_{ij} \}_{i<j}, \{\lambda_j\}_j$, and $M$ satisfying linear constraints in (\textbf{P}), and penalty parameter $\sigma > 0$
			\While{\text{not converged}}
			\State $\{\Lambda_{ij}\}_{i<j}, \{\lambda_j\}_j,\gamma,S \hspace{1.5mm} \gets \underset{\substack{\{\Lambda_{ij}\}_{i<j},\{\lambda_j\}_j,\\ \gamma,S\in\mathbb{H}_+^{CK+1}}}{\arg\min}\ L_{\sigma} (S,\{\Lambda_{ij}\}_{i<j}, \{\lambda_j\}_j,\gamma; M) $ \label{dual_algorithm_joint_step}
			\State $M \hspace{3.8mm} \gets M + \sigma \left(S - J
			+\mathcal{F}^*(\{\Lambda_{ij}\}_{i<j},\{\lambda_j\}_j,\gamma)  \right)$ where $\mathcal{F}^*$ is defined in \eqref{eq:simplified dual equality} \label{dual_algorithm_M_step1}
			\EndWhile
		\end{algorithmic}
	\end{algorithm}
	In Algorithm \ref{dual_problem_original_admm_algorithm}, it is worth noting that the primal variable $M$ always satisfies the linear constraint in (\textbf{P}) during the updates. This arises from the property that ALM always provides dual-feasible variables \cite{alm_dual_feasible} (where (\textbf{P}) is the dual problem of (\textbf{D})). 
	
	In the next subsection, we detail how to perform the minimization sub-problem in Algorithm~\ref{dual_problem_original_admm_algorithm}. Indeed, since $\{\Lambda_{ij}\}_{i<j}, \{\lambda_{j}\}_{j}, \gamma$ are unconstrained, one can eliminate them from the minimization subproblem, leaving us with a minimization problem only in terms of $S$.

	\subsection{Minimization subproblem in Algorithm \ref{dual_problem_original_admm_algorithm}} \label{details_on_ALM_dual_problem}
	
	The subproblem in Step \ref{dual_algorithm_joint_step} of Algorithm \ref{dual_problem_original_admm_algorithm} is a joint minimization problem involving both $S$ and the rest of the dual variables. However, since the minimization subproblem is unconstrained with respect to $\{\Lambda_{ij}\}_{i<j}, \{\lambda_j\}_j, \gamma$, one can first eliminate them using the first-order optimality condition and express them in terms of $S$ (and $M$). We are then left with a minimization subproblem that involves only $S$. More precisely, 
	\begin{multline}\label{eq:Lambda update}
	\Lambda_{ij}(S^{(2)}_{ij},S^{(2)}_{ji}) = (\mathcal{A}_U \mathcal{A}_U^*+ \mathcal{A}_L \mathcal{A}_L^*)^{-1} \Big( \mathcal{A}_U (J^{(2)}_{ij} -S^{(2)}_{ij} - \sigma^{-1}M^{(2)}_{ij} )+ \cr \mathcal{A}_L(J^{(2)}_{ji} -S^{(2)}_{ji} - \sigma^{-1} M^{(2)}_{ji} ) + w/\sigma \Big), \  i<j, \ i,j\in [K],
	\end{multline}
	\begin{multline}\label{eq:lambda update}
	\lambda_{j}(S^{(2)}_{jj},S^{(1)}_{j},{S^{(1)}_{j}}^*) = ( \mathcal{D} \mathcal{D}^*+\mathcal{D}_U \mathcal{D}_U^* + \mathcal{D}_L \mathcal{D}_L^*)^{-1} \Big(\mathcal{D}(J^{(2)}_{jj}-S^{(2)}_{jj} -\sigma^{-1}M^{(2)}_{jj})+\\ \mathcal{D}_U(J^{(1)}_{j}-S^{(1)}_{j}-\sigma^{-1}M^{(1)}_{j})+\mathcal{D}_L({J^{(1)}_{j}}^*-{S^{(1)}_{j}}^*-\sigma^{-1}{M^{(1)}_{j}}^*)+ z/\sigma \Big), \ i\in [K],
	\end{multline}
	and
	\begin{equation}\label{eq:gamma update}
	\gamma(S^{(0)}) = J^{(0)}-S^{(0)} -\sigma^{-1}M^{(0)} +\sigma^{-1}.
	\end{equation}

	We now substitute the above dual variables into $L_\sigma$:
	\begin{equation}\label{eq:L with S}
	L^S_\sigma(S; M) := L_\sigma(S,\{\Lambda_{ij}(S^{(2)}_{ij},S^{(2)}_{ji})\}_{i<j}, \{\lambda_{j}(S^{(2)}_{jj},S^{(1)}_{j},{S^{(1)}_{j}}^*)\}_j,\gamma(S^{(0)}); M)
	\end{equation}
	and instead of minimizing $L_{\sigma} (S,\{\Lambda_{ij}\}_{i<j}, \{\lambda_j\}_j,\gamma; M)$, we can minimize $L^S_\sigma(S; M)$ as we have just eliminated all other dual variables from $L_{\sigma} (S,\{\Lambda_{ij}\}_{i<j}, \{\lambda_j\}_j,\gamma; M)$ except $S$ using first-order optimality conditions. The ALM solely in $S$ is summarized in Algorithm~\ref{dual_problem_original_admm_algorithm in S}.
	\begin{algorithm}
		\caption{ALM for the dual problem}\label{dual_problem_original_admm_algorithm in S}
		\begin{algorithmic}[1]
			\Require $S\in\mathbb{H}_+^{CK+1}$ and $M\in\mathbb{H}^{CK+1}$ satisfying linear constraints in (\textbf{P}), and penalty parameter $\sigma > 0$
			\While{\text{not converged}}
			\State $S \hspace{1.5mm} \gets \underset{S\in\mathbb{H}_+^{CK+1}}{\arg\min}\ L^S_{\sigma} (S; M) $ \label{dual_algorithm_onlyS_S_step}
			\State Update $\{\Lambda_{ij}\}_{i<j}$ using \eqref{eq:Lambda update}, $\{\lambda_{j}\}_{j}$ using \eqref{eq:lambda update}, and $\gamma$ using \eqref{eq:gamma update}
			\State $M \hspace{3.8mm} \gets M + \sigma \left(S - J
			+\mathcal{F}^*(\{\Lambda_{ij}\}_{i<j},\{\lambda_j\}_j,\gamma)  \right)$ where $\mathcal{F}^*$ is defined in \eqref{eq:simplified dual equality} \label{dual_algorithm_onlyS_M_step}
			\EndWhile
		\end{algorithmic}
	\end{algorithm}

	\section{ALM with hierarchical dual PSD variable}\label{section:hierarcical dual ALM}
	As shown previously, the ALM method (Algorithm~\ref{dual_problem_original_admm_algorithm in S}) requires minimizing \eqref{eq:L with S}, which is an optimization problem over the PSD cone. Typically, this requires  computing the projection onto the PSD cone, and the computational complexity of this projection is cubic, making it impractical for large-scale problems. 
	
	In \cite{BM-factorization}, the authors propose a solution by introducing a change of variables for the PSD variable in the form of $S = RR^*$. This strategy effectively circumvents the difficult PSD constraint in the optimization problem, converting it into an unconstrained optimization problem. Moreover, when low-rank solutions of the SDP problem exist, the number of columns of $R$ is chosen minimally, enabling the development of an efficient algorithm using the limited-memory BFGS algorithm. However, experiments conducted using the \CVX package \cite{CVX} to directly solve either the primal problem (\textbf{P}) or the dual problem (\textbf{D}) for the TFI model (Section~\ref{sec:quantum energy minimization}) indicate a linear increase in the rank in $N$ of both the primal PSD variable $M$ and the dual PSD variable $S$. Hence, employing a vanilla low-rank decomposition of $M$ or $S$ to solve either the primal or dual problem via the limited-memory BFGS algorithm is unlikely to yield substantial reductions in computation time. 
	
	In this section, we propose a structure for the dual variable $S$ in Algorithm~\ref{dual_problem_original_admm_algorithm} that allows us to perform the ALM updates with reduced time complexity. For a $1$-D TFI model (\ref{Hamiltonian_TFI}) with a small system size $N=128$ ($CK = 384$) and an external magnetic field strength parameter $h=1$, we solve (\textbf{D}) using an ADMM-type method with direct projection onto the PSD cone. We show a heatmap of the PSD variable $S$ in Figure \ref{heatmat_PSD_variable_128}. We can see from the plot that even though $S$ is not low-rank, it is nearly zero except on a few diagonals near the main diagonal. This observation inspired us to represent the dual PSD variable $S$ using a \emph{hierarchical low-rank matrix} \cite{hierarchical_matrix1}, resulting in an algorithm with quadratic scaling of the per-iteration time complexity. We emphasize that the matrix structure we propose is different from the typical hierarchical structure in the literature in order to encode the PSDness of the variable $S$. For simplicity, we assume that $K$ is a power of $2$ in this and the following sections.
	
	\begin{figure}
		\centering
		\includegraphics[width=0.4\textwidth]{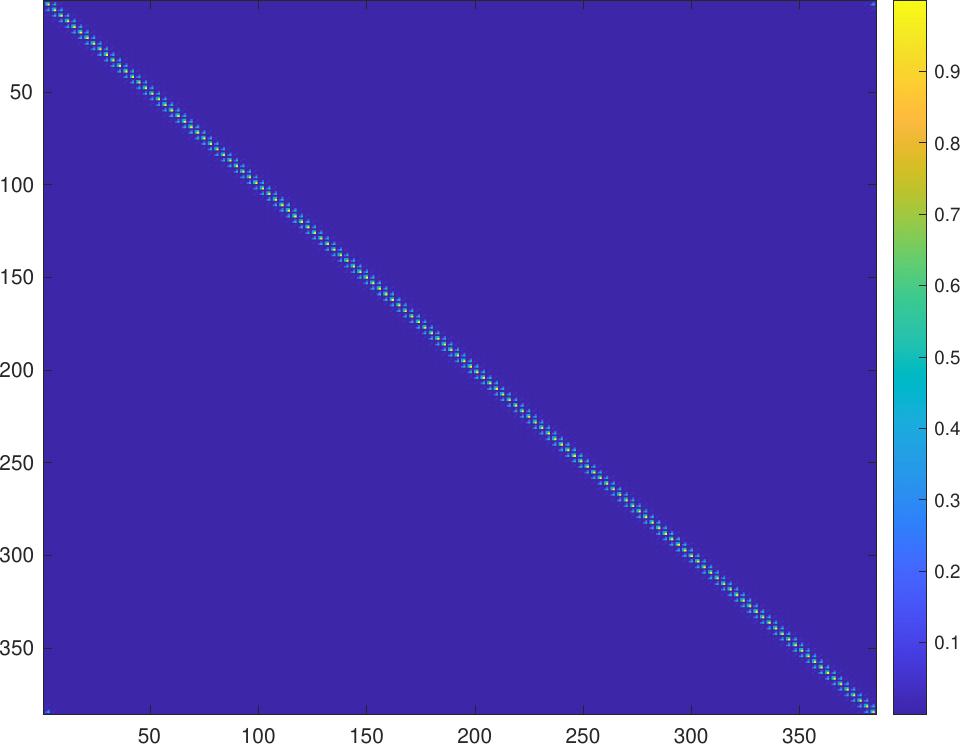}
		\caption{Heatmap of the dual PSD variable $S$ in (\textbf{D}) for $N=128$ TFI model.}
		\label{heatmat_PSD_variable_128}
	\end{figure}
	
	\subsection{Approximating $S$ with a hierarchical matrix} \label{subsection_definition_hierarchy}
	
	Let $S$ denote the solution of the dual problem (\textbf{D}) with size $(CK+1)\times (CK+1)$. We first examine the $S^{(2)}$ block of the matrix $S$ (as defined in \eqref{A_matrix_split_moments}). While our objective is to use a hierarchical matrix to represent $S^{(2)}$, we must also ensure that $S^{(2)}$ remains a positive semidefinite matrix. To this end, we use $m$ levels of hierarchy to characterize $S^{(2)}$. For the $l$-th level, we define a block diagonal matrix with $n_l$ diagonal blocks, where each block is of size $c_l\times c_l$. Furthermore, we want the block diagonal matrix to be positive semidefinite. Therefore, for the $l$-th level, we form a matrix:
	\begin{equation*} 
        \cH^{(2)}(y^{(l)}) :=
	\begin{pmatrix}
	y^{(l)}_1 (y^{(l)}_1)^* \\
	& y^{(l)}_2 (y^{(l)}_2)^* \\
	& & \ddots \\
	& & & y^{(l)}_{n_l} (y^{(l)}_{n_l})^* 
	\end{pmatrix} \in\H_+^{CK}, \quad y^{(l)}_j \in \C^{c_l \times r_l}, \quad 1 \leq j \leq n_l.
	\end{equation*}
	This naturally requires $n_l c_l = CK$, since the size of the matrix $S^{(2)}$ is $CK\times CK$. Then, as an approximation to $S^{(2)}$, we define
	\begin{multline} \label{approx_H2_hiearchy}
	\cH^{(2)}(y) := \cH^{(2)}(y^{(1)})+\cH^{(2)}(y^{(2)})+\cdots+\cH^{(2)}(y^{(m)})\\
	:=y^{(1)} (y^{(1)})^* + 
	\begin{pmatrix}
	y^{(2)}_{1} (y^{(2)}_{1})^* & 0 \\
	0 & y^{(2)}_{2} (y^{(2)}_{2})^* 
	\end{pmatrix} \\
	+\cdots+\begin{pmatrix}
	y^{(m)}_{1} (y^{(m)}_{1})^* & 0 & \cdots & 0 \\
	0 & y^{(m)}_{2} (y^{(m)}_{2})^* &  & 0 \\
	\vdots &  & \ddots & \vdots \\
	0 & \cdots & 0 & y^{(m)}_{n_m} (y^{(m)}_{n_m})^* \\
	\end{pmatrix},
	\end{multline}
	with 
	\begin{equation*}
	y := \big\{y^{(1)}, y^{(2)}, \cdots, y^{(m)} \big\}, \quad \text{and} \quad y^{(l)} := 
	\begin{pmatrix}
	y_1^{(l)} \\
	y_2^{(l)} \\
	\vdots \\
	y_{n_l}^{(l)}
	\end{pmatrix} \in \C^{CK \times r_l}.
	\end{equation*}
	Here, we assume $n_l = 2^{l-1}$ for $1 \leq l \leq m$ and $n_m = 2^{m-1} < K$. Our proposal involves representing $S^{(2)}$ with $\cH^{(2)}(y)$, where the number of levels $m$ and the number of columns for each level $r_l$ for $1 \leq l \leq m$ are determined based on the desired accuracy of the algorithm.

	To approximate the full matrix $S$, which has one extra row and one extra column compared to $S^{(2)}$, we just pad $\cH^{(2)}(y)$ with an extra row and column of zeros, plus a low-rank matrix:
	\begin{equation} \label{final_hierarchical_Hyz}
	\cH(y,t) := 
	\begin{pmatrix}
	\cH^{(2)}(y) & 0_{CK \times 1} \\
	0_{1 \times CK} & 0_{1 \times 1} 
	\end{pmatrix}+tt^* \approx S.
	\end{equation}
	Here, $y = \big\{y^{(1)}, \cdots, y^{(m)} \big\}$ is a set of matrices with $y^{(l)} \in \C^{CK \times r_l}$, and $t \in \C^{CK+1}$.
	

	\subsubsection{Validity of the hierarchical matrix representation for $S$} \label{section:validity_S}
	We now investigate the validity of representing $S$ by a hierarchical matrix $\cH(y,t)$. For this purpose, we use the TFI model as a test problem and investigate the relationship between the system size $N$, the number of levels $m$ needed, and the number of columns needed for each $y^{(l)}$. We first solved the dual problem (\textbf{D}) with the Hamiltonian specified in (\ref{Hamiltonian_TFI}), with system sizes $N=64, 128, 256$, and an external magnetic field strength parameter $h=1$. For these problem sizes, we solved for $S$ in the full PSD cone rather easily with an accuracy of $10^{-4}$. Then, to see how well these PSD $S$ can be approximated by a hierarchical structure, we fitted the resulting PSD dual variable $S$ with the structure outlined in (\ref{final_hierarchical_Hyz}). Let $S^*$ denote the approximate solution to the dual problem (\textbf{D}). Additionally, let $y,t$ be complex-valued parameters that parameterize a hierarchical matrix of the corresponding size, with number of levels $m = 3, 4, 5$ respectively for $N= 64, 128, 256$. The number of columns for each level was fixed at $20$ for all system sizes. We solved the following optimization problem with system sizes $N=64,128,256$ for the variables $y$ and $t$:
	\begin{equation} \label{approx_error_hierarchy_S}
	\min_{y, t} \ \ \text{Err}_S^2 := \frac{\|\cH(y,t)-S^*\|_F^2}{\|S^*\|_F^2},
	\end{equation}
	by running the limited-memory BFGS algorithm provided in the \Manopt toolbox \cite{Manopt} for $100$ iterations. The approximation errors are presented in Table \ref{error_fitted_dual_S}. From the table, we can see that even with fixed $r_l = 20$ for all $1 \leq l \leq m$, we obtained similar accuracy for different system sizes. Therefore, we assume one can use a fixed rank approximation in the hierarchical matrix even for large system sizes.
	
	\begin{table}[h!]
		\centering
		\begin{tabular}{|c|c|c|c|} 
			\hline
			$N$ & 64 & 128 & 256 \\ [0.5ex] 
			\hline
			$\text{Err}_S$ & $2.1506e-06$ & $6.4785e-06$ & $4.5689e-05$ \\ [1ex] 
			\hline
		\end{tabular}
		\caption{Relative errors of the fitted dual PSD variables as defined in (\ref{approx_error_hierarchy_S})}
		\label{error_fitted_dual_S}
	\end{table}

	\subsection{Update rule with a hierarchically structured variable $S$} \label{computational_analysis_of_algo_one_hierarchy}

	By substituting the PSD variable $S$ in (\ref{augmented_lagrangian_dual_original_formulation}) with a data-sparse hierarchical PSD representation, we can eliminate the challenging PSD constraint on $S$, hence significantly reducing the per-iteration computational costs. With this hierarchical representation of $S$, when performing Algorithm~\ref{dual_problem_original_admm_algorithm} where one needs to minimize the variable-reduced augmented Lagrangian function $L^S_\sigma$, we replace $S$ with the hierarchical matrix $\cH(y,t)$ defined in (\ref{final_hierarchical_Hyz}). We remind the reader again $y$ is a collection of matrices $y := \big\{y^{(1)}, y^{(2)}, \cdots, y^{(m)} \big\}$ for $y^{(l)} \in \C^{CK \times r_l}$ and $t \in \C^{(CK+1) \times 1}$, with pre-specified number of levels $m$ and number of columns $r_1, \cdots, r_m$. The resulting algorithm is outlined in Algorithm~\ref{dual_problem_alm_hierarchies}.
	
	\begin{algorithm}
		\caption{Pseudo code for ALM for the dual problem with a hierarchical dual PSD variable}\label{dual_problem_alm_hierarchies}
		\begin{algorithmic}[1]
			\Require $y$, $t$,  $M \in \H^{CK+1}$ satisfying the linear constraint in (\textbf{P}), and penalty parameter $\sigma > 0$
			\While{\text{not converged}}
			\State $y,t \gets \argmin_{y,t} L^S_{\sigma}(\cH(y,t);M)$\ 
			\State $S \hspace{2.35mm} \gets \cH(y,t)$ \label{algorithm_S_step2}
			\State Update $\{\Lambda_{ij}\}_{i<j}$ using \eqref{eq:Lambda update}, $\{\lambda_{j}\}_{j}$ using \eqref{eq:lambda update}, and $\gamma$ using \eqref{eq:gamma update}
			\State  $M \hspace{1mm} \gets M + \sigma \left(S-J+ \mathcal{F}^*(\{\Lambda_{ij}\}_{i<j},\{\lambda_j\}_j,\gamma)\right)$ where $\mathcal{F}^*$ is defined in \eqref{eq:simplified dual equality} \label{algorithm_S_step4}
			\EndWhile
		\end{algorithmic}
	\end{algorithm}
	
	We now conduct a complexity analysis of Algorithm \ref{dual_problem_alm_hierarchies}, examining its computational scaling step by step.  To remind the readers, the Lagrangian  $L_\sigma$ can be split into three different terms:
	\begin{multline}\label{eq:second order loss}
	-\sum_{i<j} \langle \Lambda_{ij}, w \rangle + \sum_{i< j} \frac{\sigma}{2} \Big(\| J^{(2)}_{ij} - \mathcal{A}^*_U(\Lambda_{ij}) - S^{(2)}_{ij} - \sigma^{-1} M^{(2)}_{ij}\|_F^2 +\cr \| J^{(2)}_{ji} - \mathcal{A}^*_L (\Lambda_{ij}) - S^{(2)}_{ji} - \sigma^{-1} M^{(2)}_{ji}\|_F^2  \Big), 
	\end{multline}
	\begin{multline}\label{eq:first order loss}
	-\sum_{j} \langle\lambda_j,z\rangle  + \frac{\sigma}{2}\sum_{j}\| J^{(2)}_{jj} - \mathcal{D}^*(\lambda_j)- S^{(2)}_{jj} - \sigma^{-1} M^{(2)}_{jj}\|_F^2  \\
	+ \frac{\sigma}{2}\sum_j \| J^{(1)}_{j} - \mathcal{D}^*_U(\lambda_j)- S^{(1)}_{j} - \sigma^{-1} M^{(1)}_{j} \|_F^2 +\frac{\sigma}{2}\sum_j  \| {J^{(1)}_{j}}^* - \mathcal{D}^*_L(\lambda_j)- {S^{(1)}_{j}}^* - \sigma^{-1} {M^{(1)}_{j}}^* \|_F^2,
	\end{multline}
	and
	\begin{equation}\label{eq:zeroth order loss}
	-\gamma  + \frac{\sigma}{2}\vert J^{(0)} -\gamma- S^{(0)} - \sigma^{-1} M^{(0)}\vert^2, 
	\end{equation}
	and $L^S_\sigma$ is obtained by substituting \eqref{eq:Lambda update}, \eqref{eq:lambda update} and \eqref{eq:gamma update} into \eqref{eq:second order loss}, \eqref{eq:first order loss} and \eqref{eq:zeroth order loss} respectively.

	Suppose we use gradient-based methods such as the limited-memory BFGS algorithm for Step 2 in Algorithm~\ref{dual_problem_alm_hierarchies}. Since the complexity of computing the gradient is asymptotically the same as the complexity of evaluating the loss function \cite{gradeqloss}, we simply analyze the computational cost of evaluating $L^S_\sigma$. The key operations in evaluating the loss $L^S_\sigma$ consist of evaluating the terms \eqref{eq:second order loss}, \eqref{eq:first order loss} and \eqref{eq:zeroth order loss}. Since \eqref{eq:first order loss} and \eqref{eq:zeroth order loss} have $O(CK)$ terms and $O(1)$ terms respectively, the computational complexity for these terms is negligible compared to $\eqref{eq:second order loss}$, which has $O(C^2 K^2)$ terms. When substituting \eqref{eq:Lambda update} into \eqref{eq:second order loss}, we have 
	\begin{multline}\label{eq:second order loss sub}
	-\sum_{i<j} \langle \Lambda_{ij}(S^{(2)}_{ij},S^{(2)}_{ji}), w \rangle + \sum_{i< j} \frac{\sigma}{2} \Big(\| J^{(2)}_{ij} - \mathcal{A}^*_U(\Lambda_{ij}(S^{(2)}_{ij},S^{(2)}_{ji})) - S^{(2)}_{ij} - \sigma^{-1} M^{(2)}_{ij}\|_F^2 \\ +\| J^{(2)}_{ji} - \mathcal{A}^*_L(\Lambda_{ij}(S^{(2)}_{ij},S^{(2)}_{ji}) ) - S^{(2)}_{ji} - \sigma^{-1} M^{(2)}_{ji}\|_F^2  \Big). 
	\end{multline}
	The question now is, with $S = H(y,t)$, meaning $S^{(2)}$ is the sum of $H^{(2)}(y)$ plus a rank-one correction term, what is the complexity of evaluating \eqref{eq:second order loss sub}. As the rank-one correction term can also be assimilated into the hierarchical structure, by adding one more column to the first hierarchy $y^{(1)}$, we can, without loss of generality, assume $S^{(2)}$ is equal to a hierarchical matrix. From the construction of $w$ in the linear constraints in \eqref{constraints:1}, we know $w$ is a matrix of all zeros, so the term $\sum_{i<j} \langle \Lambda_{ij}(S^{(2)}_{ij},S^{(2)}_{ji}), w \rangle$ disappears. Additionally, the remaining two terms $\| J^{(2)}_{ij} - \mathcal{A}^*_U(\Lambda_{ij}(S^{(2)}_{ij},S^{(2)}_{ji})) - S^{(2)}_{ij} - \sigma^{-1} M^{(2)}_{ij}\|_F^2$ and $\| J^{(2)}_{ji} - \mathcal{A}^*_L (\Lambda_{ji}(S^{(2)}_{ij},S^{(2)}_{ji}) ) - S^{(2)}_{ji} -\sigma^{-1} M^{(2)}_{ji}\|_F^2$ take similar forms, so it suffices to just analyze the complexity of the first term.
 
    As $w$ is the zero matrix, from the definitions of the operator $\mathcal{A}^*_U$ and the optimal $\Lambda_{ij}(S^{(2)}_{ij},S^{(2)}_{ji})$ in \eqref{constraints:1} and \eqref{eq:Lambda update}, it is apparent that $\left( J^{(2)}_{ij} - \mathcal{A}^*_U(\Lambda_{ij}(S^{(2)}_{ij},S^{(2)}_{ji})) - S^{(2)}_{ij} - \sigma^{-1} M^{(2)}_{ij} \right)$ is a linear function of $J_{ij}^{(2)}, J_{ji}^{(2)}, S_{ij}^{(2)}, S_{ji}^{(2)}, M_{ij}^{(2)}$ and $M_{ji}^{(2)}$. Therefore, we need to evaluate a term of the form $$\sum_{i < j} \|\cA^<_S(S_{ij}^{(2)})+\cA^>_S(S_{ji}^{(2)})+\cA^<_J(J_{ij}^{(2)})+\cA^>_J(J_{ji}^{(2)})+\cA^<_M(M_{ij}^{(2)})+\cA^>_M(M_{ji}^{(2)})\|_F^2,$$ 
    where $A^<_S, A^<_J, A^<_M, A^>_S, A^>_J, A^>_M: \C^{C \times C} \rightarrow \C^{C \times C}$ are all linear operators. The complexity of such an evaluation, up to a constant independent of $S^{(2)}$, is dominated by the inner products between these terms, and Lemma~\ref{lemma:summary of computations} gives the complexity of these operations.
    
    It is important to note that although $w$ is assumed to be the zero matrix in our proposed relaxation, the complexity analysis can be conducted in a similar way as long as $w$ is a constant matrix. To illustrate this, let $W \in \C^{CK \times CK}$ be a matrix in which every $C \times C$ block matrix is equal to $w$. Consequently, $W$ is a matrix with rank less than or equal to $C$, and thus is also a hierarchical matrix. Therefore, we can apply Lemma~\ref{lemma:summary of computations} to analyze the complexity of the inner product between $S^{(2)}$ and $W$.
	
	\begin{lemma}\label{lemma:summary of computations}
		Assuming $C$ is a constant, the complexity of
		\begin{equation}\label{eq:inner product between two matrices}
		\sum_{i<j\leq K} \tr(B_{ij} \mathcal{A}(B'_{ij}))
		\end{equation}
		for some matrices $B,B'\in \mathbb{C}^{CK\times CK}$ and some linear operator $\mathcal{A}:\mathbb{C}^{C\times C}\rightarrow \mathbb{C}^{C\times C}$ is:
		\begin{enumerate}
			\item $O(K m^2 r^2)$ if both $B$ and $B'$ are hierarchical matrices in the form of \eqref{approx_H2_hiearchy} with $m$ levels and each level having rank $r$.
			\item  $O(K^2 r)$ if $B$ is an arbitrary matrix and $B'$ is a hierarchical matrix.  
			\item $O(Kmr)$ if $B$ is sparse with $O(K)$ non-zero entries and $B'$ is a hierarchical matrix.
		\end{enumerate}
	\end{lemma}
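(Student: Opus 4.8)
\emph{Proof plan.} The plan is to handle the three cases separately, in increasing order of difficulty. The only structural facts I would use are that a hierarchical matrix of the form \eqref{approx_H2_hiearchy} is a sum of $m$ block-diagonal pieces, the $l$-th having $n_l = 2^{l-1}$ diagonal blocks, each block spanning $s_l := K/n_l$ consecutive $C\times C$ sub-blocks and having rank $r$, together with the geometric identities $\sum_{l=1}^m s_l = K\sum_{l\ge 0}2^{-l} = O(K)$ and $\sum_{l=1}^m n_l s_l^2 = \sum_{l=1}^m K^2/n_l = O(K^2)$. Since $C$ is constant, a single evaluation of $\cA$ on a $C\times C$ matrix, or a single trace of a $C\times C$ product, costs $O(1)$, and a $C\times C$ block of a rank-$r$ outer product is assembled in $O(r)$.

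For case 2, I would observe that the off-diagonal block $B'_{ij}$ (indices counted in units of $C\times C$ blocks, $i\neq j$) is nonzero only if $i$ and $j$ sit in a common diagonal block at some level $l$; in that event the level-$l$ contribution to $B'_{ij}$ equals $uv^*$ for the appropriate $C\times r$ row slices $u,v$ of $y^{(l)}$. One then iterates over all levels $l$, all $n_l$ diagonal blocks at level $l$, and all $\binom{s_l}{2}$ within-block pairs $(i,j)$, forming $uv^*$ ($O(r)$) and accumulating $\tr(B_{ij}\cA(uv^*))$ ($O(1)$) for each; accumulation is legitimate because $\cA$ and $\tr$ are linear, so the level contributions sum to $\tr(B_{ij}\cA(B'_{ij}))$. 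The cost is $O\!\left(r\sum_l n_l s_l^2\right) = O(K^2 r)$, and pairs lying in no common block contribute $\tr(B_{ij}\cA(0))=0$ and are skipped. Case 3 is the same bookkeeping with sparsity replacing exhaustive enumeration: $B$ has $O(K)$ nonzero entries, hence $O(K)$ nonzero $C\times C$ blocks, and for each with $i<j$ the block $B'_{ij}$ receives a rank-$r$ contribution from at most $m$ levels, so $\tr(B_{ij}\cA(B'_{ij}))$ is evaluated in $O(mr)$, for a total of $O(Kmr)$.

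Case 1 is the crux: forming $B'_{ij}$ for each of the $\Theta(K^2)$ block pairs already exceeds the target, so the two low-rank structures must be used jointly. I would expand $B=\sum_l\cH^{(2)}(x^{(l)})$, $B'=\sum_{l'}\cH^{(2)}(y^{(l')})$ and, for each ordered pair of levels $(l,l')$, note that the $(i,j)$ block receives a joint contribution only when $i,j$ lie in a common diagonal block at level $q := \max(l,l')$, since coarser-level blocks are unions of finer ones. Restricting to one such level-$q$ block of $C$-block-size $s_q$, the relevant slices of $x^{(l)},y^{(l')}$ are $A,\tilde B\in\C^{Cs_q\times r}$, and the block contributes $\sum_{i,j}\tr\!\big(A_iA_j^*\,\cA(\tilde B_i\tilde B_j^*)\big)$ in terms of the $C\times r$ block-rows $A_i,\tilde B_i$. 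Since $\cA$ is a fixed linear map on $\C^{C\times C}$ with $C=O(1)$, I would expand this bilinearly in the entries of $A_i,\tilde B_i$ and $A_j,\tilde B_j$, reducing it to a constant number of expressions of the form $\sum_{i,j}\langle \Phi,\, \phi_i\otimes\psi_j\rangle = \langle\Phi,\,(\sum_i\phi_i)\otimes(\sum_j\psi_j)\rangle$ with $\Phi$ a fixed $O(1)$-sized tensor and $\phi_i,\psi_j$ of size $O(r^2)$; the coupling between $i$ and $j$ runs only through $\Phi$, so the sum factors. Forming the aggregates $\sum_i\phi_i$, $\sum_j\psi_j$ costs $O(s_q r^2)$ and the final contraction $O(r^2)$, i.e.\ $O(s_q r^2)$ per level-$q$ block. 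Summing over the $n_q$ level-$q$ blocks gives $O(Kr^2)$ per level pair; there are $2q-1$ level pairs with $\max=q$, and $q$ ranges over $1,\dots,m$, so the total is $O\!\left(r^2 K\sum_{q=1}^m(2q-1)\right)=O(Km^2r^2)$. The restriction to $i<j$ (versus all $i,j$) is absorbed by subtracting the diagonal term $\sum_i\tr(A_iA_i^*\cA(\tilde B_i\tilde B_i^*))$, which is $O(s_q r^2)$ per block and hence of the same order, and by symmetrizing using the Hermiticity of $B,B'$; neither changes the asymptotics.

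The main obstacle is exactly this case-1 factorization: finding the reshaping that turns the per-block double sum from an apparent $O(s_q^2)$ into an $O(s_q)$ aggregation, and then being careful enough with the level-geometry count to land on $m^2$ (and not $m$ or $m^3$). Cases 2 and 3, by contrast, should be routine once the counting identities $\sum_l s_l = O(K)$ and $\sum_l n_l s_l^2 = O(K^2)$ are in hand.
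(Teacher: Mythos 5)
Your argument follows essentially the same route as the paper's: reduce the trace to a constant number of entrywise bilinear sums over the $C\times C$ block indices (the paper's $4$-tensor $D_{\cA}$), decompose each hierarchical matrix level by level, observe that a level pair $(l,l')$ only couples indices sharing a diagonal block at level $\max(l,l')$, and pay $O(Kr^2)$ per level pair via a rank-factorized aggregation, giving $O(Km^2r^2)$; cases 2 and 3 are the same direct enumerations the paper uses. Your case-1 factorization is in fact the content of the paper's (unproven) Proposition~\ref{prop:computational_complexity_of_upper triangular}, part 1, which you make explicit. The one weak step is your claim that the restriction to $i<j$ is absorbed ``by symmetrizing using the Hermiticity of $B,B'$'': the summand $\tr(B_{ij}\cA(B'_{ij}))$ is generally \emph{not} symmetric under $i\leftrightarrow j$ even for Hermitian $B,B'$, since a generic linear $\cA$ does not commute with conjugate transposition, so $\sum_{i<j}\neq\tfrac12(\sum_{i,j}-\sum_{i=j})$. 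This does not affect the asymptotics: the standard fix is to accumulate the $O(r^2)$-sized aggregates $\sum_{i<j}\phi_i$ as prefix sums while sweeping $j$ through each diagonal block, which still costs $O(s_q r^2)$ per block and recovers exactly the strict upper-triangular sum without any symmetry assumption. With that one-line repair the proposal is correct and matches the paper's proof in substance.
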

	\begin{proof}
		For any linear operator $\mathcal{A}:\mathbb{C}^{C\times C}\rightarrow \mathbb{C}^{C\times C}$, $\sum_{i<j\leq K} \tr(B_{ij} \cA(B'_{ij}))$ can be written as 
        \begin{equation} \label{lemma1_equa1}
            \sum_{\substack{i<j\\i,j\in [K]}} \sum_{k,\kappa,k',\kappa'\in [C]} D_{\cA}(k,k',\kappa,\kappa')B_{ij}(k,\kappa)B'_{ij}(k',\kappa')
        \end{equation} 
        for a $4$-tensor $D_{\cA}$ whose values depend on $\cA$. If both $B$ and $B'$ take the form in \eqref{approx_H2_hiearchy}, the sum \\ 
        $\sum_{\substack{i<j\\i,j\in [K]}}B_{ij}(k,\kappa)B'_{ij}(k',\kappa')$ can be computed with $O(Km^2 r^2)$ complexity (see Proposition~\ref{computational_complexity_hierarchical_inner_product}). Then the sum $\sum_{k,\kappa,k',\kappa'\in [C]}$ contributes a factor of $C^4$, giving a total complexity of  $O(C^4 K m^2 r^2)$. We ignore the factor $C^4$ as $C$ is assumed to be a constant. 
		
		The complexity of the second statement can be shown in a similar way. If $B'$ is a hierarchical matrix, using Proposition~\ref{prop:computational_complexity_hierarchical_inner_product_full}, one can show that the sum $\sum_{\substack{i<j\\i,j\in [K]}}B_{ij}(k,\kappa)B'_{ij}(k',\kappa')$ can be computed with $O(K^2 r)$ complexity. The summation $\sum_{k,\kappa,k',\kappa'\in [C]}$ further contributes a factor of $C^4$, which is again ignored as $C$ is assumed to be a constant.
		
		The last statement is a direct consequence of $B$ being a sparse matrix. 
	\end{proof}
	
	Based on this lemma, the complexity of $\sum_{i < j} \langle \cA^<_S(S_{ij}^{(2)}), \cA^<_J(J_{ij}^{(2)})+\cA^>_J(J_{ji}^{(2)}) \rangle$ and \\ $\sum_{i < j} \langle \cA^>_S(S_{ji}^{(2)}), \cA^<_J(J_{ij}^{(2)})+\cA^>_J(J_{ji}^{(2)}) \rangle$ is $O(Kmr)$, due to the fact that $J$ is sparse with $O(K)$ non-zero entries. The complexity of $\sum_{i<j}\|\cA^<_S(S_{ij}^{(2)})+\cA^>_S(S_{ji}^{(2)})\|_F^2$ is $O(Km^2r^2)$, and the complexity of $\sum_{i < j} \langle \cA^<_S(S_{ij}^{(2)}), \cA^<_M(M_{ij}^{(2)})+\cA^>_M(M_{ji}^{(2)}) \rangle$ and $\sum_{i < j} \langle \cA^>_S(S_{ji}^{(2)}), \cA^<_M(M_{ij}^{(2)})+\cA^>_M(M_{ji}^{(2)}) \rangle$ is $O(K^2 r)$. Assuming $r$ is a constant and $m = \log_2(CK)$, the computational cost is dominated by the inner products between blocks of $S$ and $M$ as it has a quadratic growth with respect to $K$. This stems from the fact that $M^{(2)}$ is an unstructured matrix. At this point, we have successfully reduced the per-iteration cost of vanilla ALM (Algorithm~\ref{dual_problem_original_admm_algorithm in S}) from $O(K^3)$ to $O(K^2)$ by assuming $S^{(2)}$ takes the form of a hierarchical positive semidefinite matrix in Algorithm~\ref{dual_problem_alm_hierarchies}.


	\section{ALM with hierarchical primal and dual PSD variables}\label{section:hierarchical primal and dual ALM}
	
	As analyzed in Section \ref{computational_analysis_of_algo_one_hierarchy}, Algorithm~\ref{dual_problem_alm_hierarchies} has an $O(K^2)$ per-iteration complexity due to the lack of structure in the primal variable $M$. While this is already a speed-up compared to a vanilla ALM with cubic complexity, we propose replacing the direct update rule in  Step \ref{algorithm_S_step4} of Algorithm \ref{dual_problem_alm_hierarchies} with a projection step that compresses $M$ in order to obtain a nearly linear per-iteration cost. 
	
	Before discussing how to form a compressed representation for the primal variable, we rewrite the primal variable update in Algorithm \ref{dual_problem_alm_hierarchies} as the solution to the following problem:
	\begin{align} \label{Projection_problem_step1}
	\begin{split}
	&\argmin_{\tilde M} \quad \|\tilde M - \big(M + \sigma \left(S-J+ \mathcal{F}^*(\{\Lambda_{ij}\}_{i<j},\{\lambda_j\}_j,\gamma)\right))\big)\|_F^2, \\
	\end{split}
	\end{align}
	Here, $S$ is represented hierarchically as $S = \cH(y,t)$, with  $\{\Lambda_{ij}\}_{i<j}, \{\lambda_{j}\}_j, \gamma$ being defined in \eqref{eq:Lambda update}, \eqref{eq:lambda update} and \eqref{eq:gamma update}. For simplicity, we still use (25) to represent the primal variable $M$ as a hierarchical matrix, i.e. $M = \cH(x,v)$, where $x = \big\{x^{(1)}, \cdots, x^{(m)} \big\}$ is a set of matrices with $x^{(l)} \in \C^{CK \times r_l}$, and $v \in \C^{CK+1}$. Then \eqref{Projection_problem_step1} becomes:
	\begin{align} \label{Projection_problem_step2}
	\begin{split}
	&\argmin_{x,v} \quad \|\cH(x,v) - \big(M + \sigma \left(S-J+ \mathcal{F}^*(\{\Lambda_{ij}\}_{i<j},\{\lambda_j\}_j,\gamma)\right))\big)\|_F^2, \\
	\end{split}
	\end{align}

	We now introduce Algorithm~\ref{dual_problem_alm_two_hierarchies}, which utilizes a hierarchical representation for both the primal and the dual PSD variables.
	\begin{algorithm}[H]
		\caption{Pseudo code for ALM for the dual problem with two hierarchical PSD variables}
		\label{dual_problem_alm_two_hierarchies}
		\begin{algorithmic}[1]
			\Require $y,t$ for the dual variable $S=\cH(y,t)$, $x,v$ for the primal variable $M=\cH(x,v)$, and penalty parameter $\sigma > 0$
			\While{not converged}
			\State $M \hspace{1mm} \leftarrow \cH(x,v)$
			\State $y,t \gets \argmin_{y,t} L^S_{\sigma}(\cH(y,t);M)$
			\State $S \hspace{2.3mm} \gets \cH(y,t)$ \label{algorithm_final_step2}
			\State Update $\{\Lambda_{ij}\}_{i<j}$ using \eqref{eq:Lambda update}, $\{\lambda_{j}\}_{j}$ using \eqref{eq:lambda update}, and $\gamma$ using \eqref{eq:gamma update}
			\State Update $x,v$ by solving \eqref{Projection_problem_step2} \label{algorithm_final_step4}
			\EndWhile
		\end{algorithmic}
	\end{algorithm}
	We highlight that $M$ and $S$ are never explicitly formed as $(CK+1)\times (CK+1)$ matrices to ensure efficient computations. The per-iteration computational complexity of Algorithm  \ref{dual_problem_alm_two_hierarchies} can be analyzed following the approach in Section  \ref{computational_analysis_of_algo_one_hierarchy} for Algorithm \ref{dual_problem_alm_hierarchies}. We assume the hierarchical representation of the primal and dual PSD variables has the same number of levels $m$, with a constant number of columns $r$ for each level. As before, in Step~\ref{algorithm_S_step2} and Step~\ref{algorithm_S_step4} of the algorithm, the evaluation of the loss consists of computations in Lemma~\ref{lemma:summary of computations}. When we replace both primal and dual variables with hierarchical matrices, the second scenario in Lemma~\ref{lemma:summary of computations} is eliminated, and we achieve a near-linear complexity of $O(Km^2r^2)$ with an $r$ that does not grow with $K$ and $m = O(\log_2(CK))$.
	
	\subsection{Validity of the hierarchical matrix representation for M}
        In this section, we investigate the validity of representing $M$ by a hierarchical matrix $\cH(x,v)$, using the same method as in section \ref{section:validity_S} for the dual PSD variable $S$. Instead of fitting the dual PSD variable $S$, we fitted the primal PSD variable $M$ resulting from solving (\textbf{D}) with the Hamiltonian specified in (\ref{Hamiltonian_TFI}), for system sizes $N=64, 128, 256$, an external magnetic field strength parameter $h=1$ and an accuracy of $10^{-4}$. Let $M^*$ be the approximate solution. Let $x,v$ be complex-valued parameters that parameterize a hierarchical matrix of the corresponding
        size, with the number of levels $m = 3, 4, 5$ respectively for $N = 64, 128, 256$. The number of columns for each level was fixed at $20$ for all system sizes. We solved the following optimization problem
        with system sizes $N = 64, 128, 256$ for the variables $x$ and $v$:

        \begin{equation} \label{approx_error_hierarchy_M}
	\min_{x,v} \ \ \text{Err}_M^2 := \frac{\|\cH(x,v)-M^*\|_F^2}{\|M^*\|_F^2},
	\end{equation}
	by running the limited-memory BFGS algorithm provided in the \Manopt toolbox \cite{Manopt} for $100$ iterations. The approximation errors are presented in Table \ref{error_fitted_dual_M}. The experiment results indicate that as the system size increases, the relative error of the fitted primal PSD variable slowly increase, showing that the hierarchical matrix representation for $M$ can be valid for larger system sizes. 
	
	\begin{table}[h!]
		\centering
		\begin{tabular}{|c|c|c|c|} 
			\hline
			$N$ & 64 & 128 & 256 \\ [0.5ex] 
			\hline
			$\text{Err}_M$ & $3.2466e-04$ & $4.7392e-04$ & $6.1500e-04$ \\ [1ex] 
			\hline
		\end{tabular}
		\caption{Relative errors of the fitted primal PSD variables as defined in (\ref{approx_error_hierarchy_M})}
		\label{error_fitted_dual_M}
	\end{table}

	\section{Numerical experiments}\label{section:numerics}
	In this section, we present numerical experiments for the $1$-D TFI model using Algorithms \ref{dual_problem_alm_hierarchies} and \ref{dual_problem_alm_two_hierarchies}, with system sizes $N \in \{64,128,256,512,1024,2048,4096\}$. The penalty parameter is initialized at $\sigma = 0.1$ for $N\geq 1024$, $\sigma = 1$ for $N\leq 512$ and is adjusted dynamically \cite{admm_penalty_adjust} based on primal and dual feasibility to speed up the convergence of the ALM algorithm. For both algorithms, the number of levels $m$ in the hierarchy is set to be $m = 3,4,\cdots,9$ for $N=64,128,\cdots,4096$, and the number of columns for all levels is set to be $r=20$. In Algorithm~\ref{dual_problem_alm_hierarchies}, $y$, $t$ are initialized from the standard normal distribution, and $M_0=I_{3N+1}$. In Algorithm~\ref{dual_problem_alm_two_hierarchies}, $x$, $v$, $y$ and $t$ are randomly initialized from the standard normal distribution. Throughout the updates of Algorithms \ref{dual_problem_alm_hierarchies} and \ref{dual_problem_alm_two_hierarchies}, we evaluate the accuracy of approximate solutions by monitoring the relative primal feasibility, the relative dual feasibility, and the relative duality gap, as detailed in the following.
	
	The primal feasibility of the variable $M$ is governed by how well it satisfies the PSD and linear constraints in (\textbf{P}). In Algorithm~\ref{dual_problem_alm_hierarchies}, $M$ is directly updated as a dense matrix, while in Algorithm~\ref{dual_problem_alm_two_hierarchies}, $M$ is maintained using a compressed representation that satisfies the linear constraints. Since in Algorithm~\ref{dual_problem_alm_hierarchies}, $M$ is guaranteed to satisfy the linear constraints in (\textbf{P}), we only monitor its PSDness using the following measure:
	$$\eta_{P_3} := \frac{\max(0, -\lambda_{\text{min}})}{1+\max(0, \lambda_{\text{max}})},$$ where $\lambda_{\text{min}}$ and $\lambda_{\text{max}}$ are the smallest and largest eigenvalues of $M$. In Algorithm~\ref{dual_problem_alm_two_hierarchies}, we update $M$ by solving (34), which guaranteeds that $M$ is PSD. Therefore, we only monitor the linear constraint satisfaction by:
	$$\eta_{P_4} := \frac{\|\mathcal{A}(M)-b\|_2}{1+\|b\|_2},$$ 
	where $\mathcal{A}(M)=b$ encodes all linear constraints in $(\textbf{P})$.
	
	For the dual problem (\textbf{D}) and a candidate dual variable $S$, the PSD constraint for $S$ is automatically satisfied because $S$ is maintained as a positive semidefinite hierarchical matrix in both algorithms. We thus only monitor the dual feasibility by how well the dual equality constraint is satisfied using the following measure:
	\begin{equation*}
	\eta_{D}:= \frac{\|S - J
		+\mathcal{F}^*(\{\Lambda_{ij}\}_{i<j},\{\lambda_j\}_j,\gamma) \|_F}{1+\|J\|_F}.
	\end{equation*}
	
	Finally, we monitor the relative duality gap by:
	$$\eta_g := \frac{|\text{primal objective}-\text{dual objective}|}{1+|\text{primal objective}|+|\text{dual objective}|}.$$ 
	
	We terminate the algorithm when $\eta := \max(\eta_P, \eta_D, \eta_g) \leq 10^{-3}$, or when the ALM algorithm has run for $150$ iterations. It is important to highlight that we exploit the hierarchical structure present in the PSD primal and dual variables to efficiently evaluate these convergence metrics.
	
	We examine the ground-state energy recovery for the TFI model on an $N \times 1$ lattice, for system sizes $N \in \{64, 128, 256, 512, 1024, 2048, 4096\}$ and an external magnetic field strength parameter $h \in \{0.1, 1, 1.5\}$. Let $E_0$ denote the true ground-state energy and $\tilde{E}_0$ the lower bound of the ground-state energy obtained from (\textbf{P}). The relative error is defined as:
	$$\text{Err}_{\text{rel}} := \frac{E_0 - \tilde{E}_0}{|E_0|}.$$ The relative errors for Algorithms \ref{dual_problem_alm_hierarchies} and \ref{dual_problem_alm_two_hierarchies} are given in Tables \ref{energy_rel_err2} and \ref{energy_rel_err3}. Additionally, we present the evolution of our convergence metrics as a function of the ALM iteration number in Figures \ref{convergence_one_hierarchy} and \ref{convergence_two_hierarchies}, focusing on the $1$-D TFI model with a fixed external magnetic field strength parameter $h=1$ and various system sizes. Alongside the relative primal and dual feasibility measures and the relative duality gap, we also track the per-site primal objective change between subsequent iterations. All convergence metrics are transformed using a base-$10$ logarithm function. Within $150$ ALM iterations, all metrics drop below $10^{-3}$ for experiments with an external magnetic field strength parameter $h=1$.
	
	\begin{table}[ht!]
		\centering
		\begin{tabular}{|c|c|c|c|c|c|c|c|} 
			\hline
			& N=64 & N=128 & N=256 & N=512 & N=1024 & N=2048 & N=4096\\ [0.5ex] 
			\hline
			$h=0.5$ & $1.11\%$ & $1.12\%$ & $1.14\%$ & $1.14\%$ & $1.14\%$ & $1.15\%$ & $1.15\%$ \\ [1ex] 
			\hline
			$h=1$   & $2.73\%$ & $2.74\%$ & $2.75\%$ & $2.76\%$ & $2.77\%$ & $2.77\%$ & $2.77\%$\\ [1ex] 
			\hline
			$h=1.5$ & $0.71\%$ & $0.72\%$ & $0.72\%$ & $0.71\%$ & $0.71\%$ & $0.71\%$  & $0.71\%$\\ [1ex] 
			\hline
		\end{tabular}
		\caption{Relative errors of the ground-state energy from Algorithm \ref{dual_problem_alm_hierarchies}}
		\label{energy_rel_err2}
	\end{table}
	
	\begin{table}[h!]
		\centering
		\begin{tabular}{|c|c|c|c|c|c|c|c|} 
			\hline
			& N=64 & N=128 & N=256 & N=512 & N=1024 & N=2048 & N=4096\\ [0.5ex] 
			\hline
			$h=0.5$ & $1.10\%$ & $1.16\%$ & $1.16\%$ & $1.15\%$ & $1.16\%$ & $1.16\%$ & $1.17\%$ \\ [1ex] 
			\hline
			$h=1$   & $2.78\%$ & $2.75\%$ & $2.77\%$ & $2.78\%$ & $2.77\%$ & $2.77\%$ & $2.80\%$\\ [1ex] 
			\hline
			$h=1.5$ & $0.70\%$ & $0.71\%$ & $0.70\%$ & $0.70\%$ & $0.70\%$ & $0.71\%$ & $0.71\%$ \\ [1ex] 
			\hline
		\end{tabular}
		\caption{Relative errors of the ground-state energy from Algorithm \ref{dual_problem_alm_two_hierarchies}}
		\label{energy_rel_err3}
	\end{table}

	\begin{figure}
		\centering
		\includegraphics[width=0.9\textwidth]{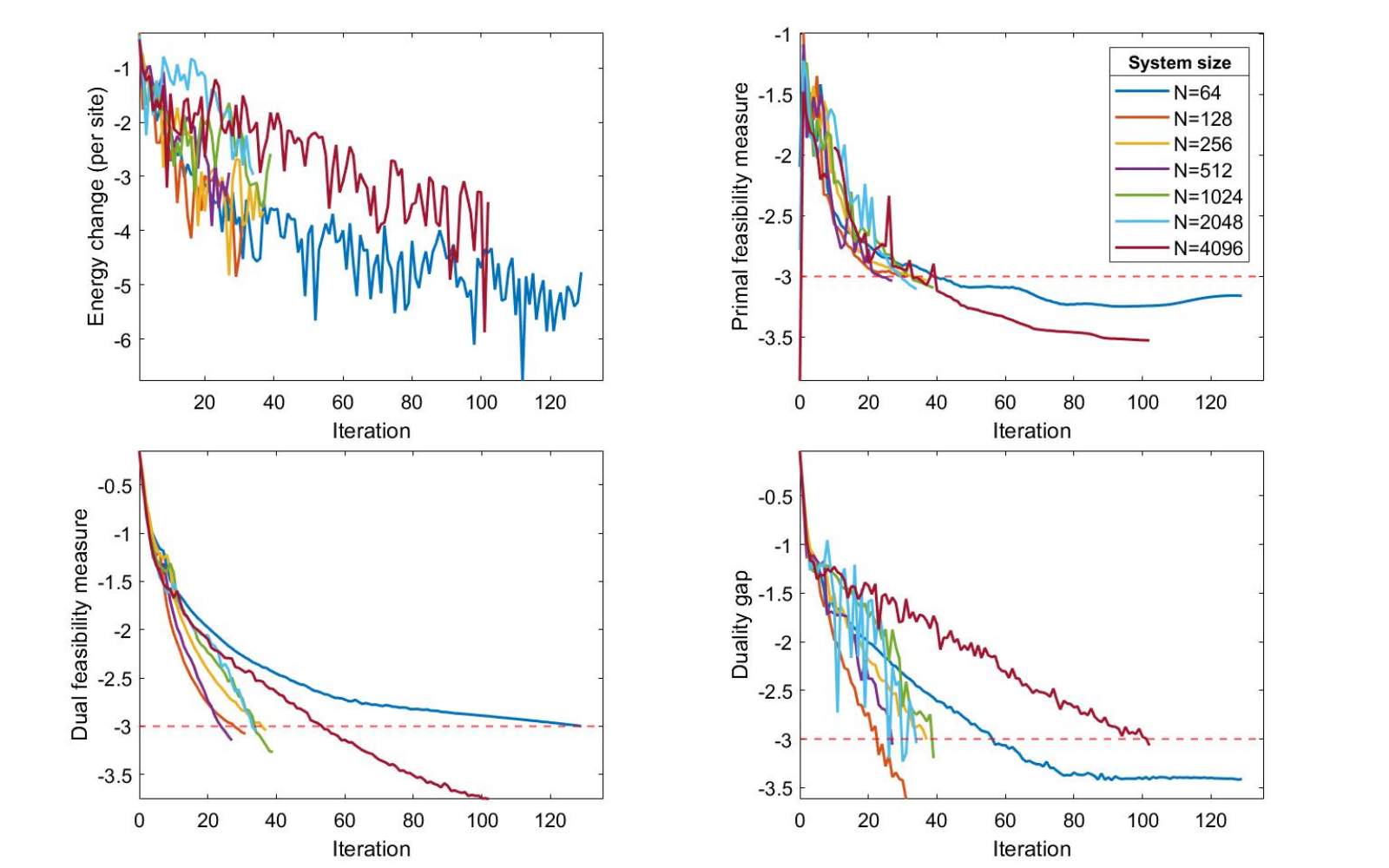}
		\caption{Convergence plot (in $\log_{10}$) for Algorithm \ref{dual_problem_alm_hierarchies}}
		\label{convergence_one_hierarchy}
	\end{figure}
	
	\begin{figure}
		\centering
		\includegraphics[width=0.9\textwidth]{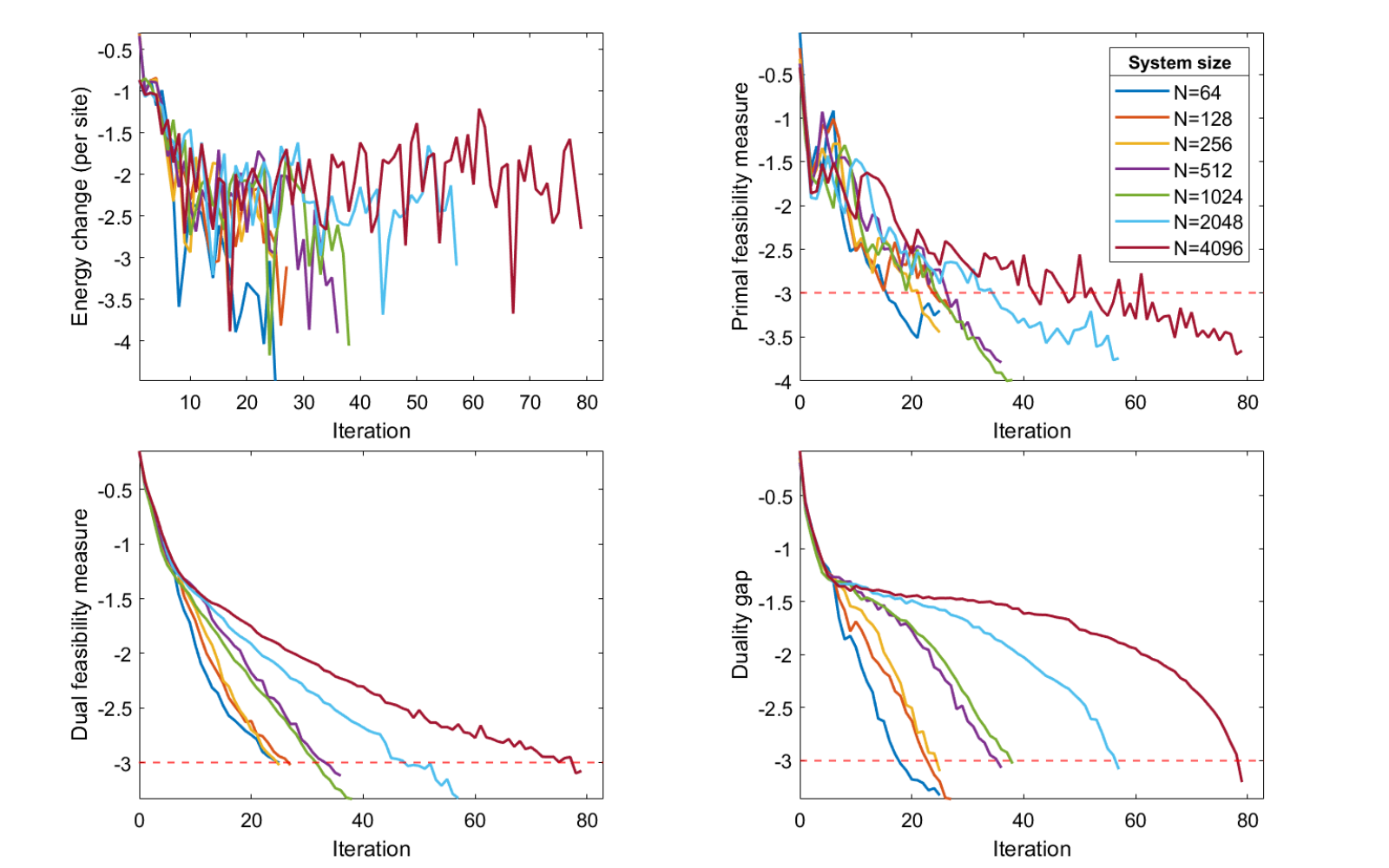}
		\caption{Convergence plot (in $\log_{10}$) for Algorithm \ref{dual_problem_alm_two_hierarchies}.}
		\label{convergence_two_hierarchies}
	\end{figure}

	\section{Conclusion}
	In this paper, we explored the computation of a specific semidefinite relaxation for determining the ground-state energy of many-body problems, which can be solved in polynomial time and provides a reasonable lower bound for the ground-state energy. Additionally, we identified a hierarchical structure in both the positive semidefinite (PSD) primal and dual variables, allowing us to circumvent the expensive projection onto the PSD cone, thereby reducing the per-iteration complexity of the ALM-type algorithm from cubic to quadratic or almost linear. 
	
	The relaxed problem provides only a lower bound for the lowest energy. To evaluate the effectiveness of our approach, we compare the recovered lower bound with the true ground-state energy for the $1$-D transverse field Ising model. Notably, for the most challenging case of $h=1$, where the system undergoes a quantum phase transition, our algorithm still produces a reasonable lower bound. Furthermore, our algorithm can handle systems consisting of up to $4096$ spins, whereas previous work on the variational embedding method, such as \cite{variational_embedding_qmb,variational_embedding_pd_algorithm}, which employs more accurate yet more expensive constraints, can only manage systems of a few dozen spins without leveraging the periodicity of the model.
	
	Currently, unlike \cite{variational_embedding_qmb,variational_embedding_pd_algorithm}, we do not impose conic constraints on each of the off-diagonal blocks of the moment matrix. Including them while leveraging the computational benefits of a hierarchical matrix will be the subject of future investigations. 

    \section*{Acknowledgements}
        Y.W. acknowledges partial support by ASCR Award DE-SC0022232 from the Department of Energy. Y.K. acknowledges partial support by NSF-2111563, NSF-2339439 from the National Science Foundation, and a Sloan research fellowship. Y.K. also acknowledges various interesting discussions with Ling Liang, Michael Lindsey, and Kim-Chuan Toh to speed up semidefinite programming.

	\newpage
	\appendix
	\section{Definitions of $\cA_U,\cA_L,\cD_U,\cD_L,\cD, J, w$ and $z$}

	In this section, we provide the definitions of the operators $\cA_U,\cA_L,\cD_U,\cD_L,\cD$, the matrix $J$, and the vectors $w$ and $z$ used in the main text. 
	\begin{itemize}
		\item The definition of the operator $\mathcal{A}_U:\mathbb{C}^{3\times 3}\to\mathbb{C}^9$ is given by:
		$$\mathcal{A}_U=A_U\circ \textcolor{blue}{vec},\quad A_U=\frac{1}{2i}I_9,$$
		where $\circ$ denotes the composition of the operators, $\textcolor{blue}{vec}:\mathbb{C}^{3\times 3}\to\mathbb{C}^9$ is the vectorization operator that stacks the columns of a matrix, $i$ is the imaginary unit, and $I_9$ is the $9\times 9$ identity matrix.
		\item The definition of the operator $\mathcal{A}_L:\mathbb{C}^{3\times 3}\to\mathbb{C}^9$ is given by:
		$$\mathcal{A}_L=A_L\circ \textcolor{blue}{vec},\quad A_L=-\frac{1}{2i}\begin{bmatrix}
        1&0&0&0&0&0&0&0&0\\
        0&0&0&1&0&0&0&0&0\\
        0&0&0&0&0&0&1&0&0\\
        0&1&0&0&0&0&0&0&0\\
        0&0&0&0&1&0&0&0&0\\
        0&0&0&0&0&0&0&1&0\\
        0&0&1&0&0&0&0&0&0\\
        0&0&0&0&0&1&0&0&0\\
        0&0&0&0&0&0&0&0&1\\
    \end{bmatrix}.$$
	\item The definition of the operator $\cD:\mathbb{C}^{3\times 3}\to\mathbb{C}^{12}$ is given by:
	$$\mathcal{D}=D\circ \textcolor{blue}{vec},\quad D=\begin{bmatrix}
        I_9\\O_{3\times 9}
    \end{bmatrix},$$
	where $O_{3\times 9}$ is a $3\times 9$ zero matrix.
	\item The definition of the operator $\mathcal{D}_U:\mathbb{C}^{3}\to\mathbb{C}^{12}$ is given by:
	$$\mathcal{D}_U=\frac{1}{2}\begin{bmatrix}
        0&0&0\\0&0&i\\0&-i&0\\0&0&-i\\0&0&0\\i&0&0\\0&i&0\\-i&0&0\\0&0&0\\1&0&0\\0&1&0\\0&0&1
    \end{bmatrix},$$
	\item The definition of the operator $\mathcal{D}_L:\mathbb{C}^{3}\to\mathbb{C}^{12}$ is given by:
	$$\mathcal{D}_L=D_L\circ H,\quad D_L=\frac{1}{2}\begin{bmatrix}
        0&0&0\\0&0&i\\0&-i&0\\0&0&-i\\0&0&0\\i&0&0\\0&i&0\\-i&0&0\\0&0&0\\-1&0&0\\0&-1&0\\0&0&-1
    \end{bmatrix},$$
	where $H:\mathbb{C}^{1\times 3}\to\mathbb{C}^{3\times 1}, v\mapsto v^*$ is the conjugate transpose operator.
	\item Given the parameter $h$ that controls the strength of the external
	magnetic field along the $x$ axis and the system size $N$, the matrix $J$ can be constructed by following rule:
	$$J(i,j)=\begin{cases}
		-0.5, & \text{if }\begin{matrix}
		& i=3N,\, j=3,\\
		& i=3,\, j=3N,\\
		& i=3k,\, j=3k+3,  \text{for } k\in[N-1],\\
		& i=3k+3,\, j=3k,  \text{for } k\in[N-1],
		\end{matrix}\\
		-0.5h & \text{if}\begin{matrix}
		& i=3k-2,\, j=3N+1,  \text{for } k\in[N],\\
		& i=3N+1,\, j=3k-2,  \text{for } k\in[N],
		\end{matrix}\\
		0, & \text{otherwise.}
		\end{cases}$$
	\item The vector $w$ is defined as:
	$$w=(0,0,0,0,0,0,0,0,0)^\top.$$
	\item The vector $z$ is defined as:
	$$z=(1,0,0,0,1,0,0,0,1,0,0,0)^\top.$$
	\end{itemize}

	\section{Complexity of operations with hierarchical matrices} 
	
	In this section, we present several propositions that describe the complexity of manipulating hierarchical matrices.
	
	\begin{proposition} \label{prop:computational_complexity_of_upper triangular}
		\begin{enumerate}
			\item Let $A=a_1 a_2^*$ and $B=b_1 b_2^*$ be two low-rank matrices with $a_1, a_2, b_1, b_2\in \mathbb{C}^{p\times r}$. Then $\sum_{i<j\leq p}A(i,j)B(i,j)$ has a complexity of $O(pr^2)$.
			\item Let $A=a_1 a_2^*$ be a low-rank matrix with $a_1, a_2\in \mathbb{C}^{p\times r}$ and $B\in \mathbb{C}^{p\times p}$. Then $\sum_{i<j\leq p}A(i,j)B(i,j)$ has a complexity of $O(p^2 r)$.
		\end{enumerate}
	\end{proposition}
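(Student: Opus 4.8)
The plan is to reduce both statements to direct summations over entries, exploiting the rank-$r$ factorizations to rearrange the order of summation so that the dependence on $i$, $j$ separates from the dependence on the factor columns. I will treat the two statements in turn, since they share the same underlying trick but differ in what is assumed about $B$.

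\textbf{Statement 1.} Write $A = a_1 a_2^*$ and $B = b_1 b_2^*$, so that $A(i,j) = \sum_{s=1}^r a_1(i,s)\overline{a_2(j,s)}$ and similarly $B(i,j) = \sum_{s'=1}^r b_1(i,s')\overline{b_2(j,s')}$. The idea is that the constrained sum $\sum_{i<j\leq p}$ can be written as $\tfrac12\big(\sum_{i,j} - \sum_{i=j}\big)$, reducing it to unconstrained double sums (the diagonal term $\sum_i A(i,i)B(i,i)$ is clearly $O(pr^2)$, being $p$ terms each an $O(r^2)$ quadratic form). For the full double sum, substitute the factorizations and interchange the order of summation:
\begin{equation*}
\sum_{i,j} A(i,j)B(i,j) = \sum_{s,s'} \Big(\sum_i a_1(i,s) b_1(i,s')\Big)\Big(\sum_j \overline{a_2(j,s)}\,\overline{b_2(j,s')}\Big).
\end{equation*}
For each fixed pair $(s,s')$ the two inner sums over $i$ and $j$ each cost $O(p)$, and there are $r^2$ such pairs, giving $O(pr^2)$ total. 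One should precompute the $r\times r$ matrices with entries $\sum_i a_1(i,s)b_1(i,s')$ and $\sum_j \overline{a_2(j,s)b_2(j,s')}$ once (cost $O(pr^2)$ each), then combine them with an $O(r^2)$ dot product. This establishes the $O(pr^2)$ bound.

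\textbf{Statement 2.} Now $B\in\C^{p\times p}$ is arbitrary, but $A = a_1 a_2^*$ still has rank $r$. Again split $\sum_{i<j\leq p} = \tfrac12(\sum_{i,j} - \sum_{i=j})$; the diagonal correction $\sum_i A(i,i)B(i,i)$ costs $O(p r)$. For the full sum, substitute only the factorization of $A$:
\begin{equation*}
\sum_{i,j} A(i,j) B(i,j) = \sum_{s=1}^r \sum_{i,j} a_1(i,s)\,\overline{a_2(j,s)}\,B(i,j) = \sum_{s=1}^r \Big( \sum_i a_1(i,s) \big(\sum_j B(i,j)\overline{a_2(j,s)}\big)\Big).
\end{equation*}
For each fixed $s$, computing the vector $u^{(s)}_i := \sum_j B(i,j)\overline{a_2(j,s)}$ is a matrix-vector product costing $O(p^2)$, and then $\sum_i a_1(i,s) u^{(s)}_i$ costs $O(p)$. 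Summing over the $r$ values of $s$ gives $O(p^2 r)$. (If one is careful, the $r$ matrix-vector products can be batched as a single $p\times p$ times $p\times r$ multiplication, again $O(p^2 r)$.)

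\textbf{Main obstacle.} The computations themselves are routine; the only subtlety worth stating carefully is the handling of the strict inequality $i<j$ versus unconstrained sums. The symmetrization identity $\sum_{i<j} = \tfrac12(\sum_{i,j\in[p]} - \sum_{i=j})$ works cleanly here because the summand $A(i,j)B(i,j)$ need not be symmetric in $(i,j)$ only if we also need $\sum_{i>j}$; but in fact $\sum_{i,j} A(i,j)B(i,j) = \sum_{i<j} + \sum_{i>j} + \sum_{i=j}$, and the off-diagonal halves are \emph{not} equal in general. So I should instead directly argue that $\sum_{i<j}$ is itself computable in the stated complexity by the same factor-interchange argument, simply restricting the inner $i$-sum to run only over indices below the current $j$ — i.e. replacing plain sums $\sum_i a_1(i,s)b_1(i,s')$ with prefix sums, which can be accumulated in a single left-to-right pass over $j$ at no extra asymptotic cost. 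This prefix-sum bookkeeping is the one place where care is needed; everything else is a direct consequence of the rank-$r$ structure.
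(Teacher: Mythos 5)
The paper states this proposition without any proof at all --- it is treated as an elementary counting fact and used as the base case for Propositions~2 and~3 --- so there is no official argument to compare against; your write-up supplies the missing justification. Your final argument is correct: factor $A(i,j)=\sum_s a_1(i,s)\overline{a_2(j,s)}$ (and likewise $B$ in part~1), interchange the order of summation, and handle the restriction $i<j$ by accumulating prefix sums of $a_1(i,s)b_1(i,s')$ in a single left-to-right pass over $j$, giving $O(p)$ work per pair $(s,s')$ and hence $O(pr^2)$ overall; in part~2 the restricted inner sum $\sum_{i<j}a_1(i,s)B(i,j)$ is just a triangular matrix--vector product, still $O(p^2)$ per $s$, hence $O(p^2r)$. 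The only genuine misstep is the opening claim that $\sum_{i<j}=\tfrac12\bigl(\sum_{i,j}-\sum_{i=j}\bigr)$, which is false for a summand that is not symmetric in $(i,j)$ --- and $A(i,j)B(i,j)$ is not symmetric here, since $a_1a_2^*$ and $b_1b_2^*$ need not be Hermitian. You catch this yourself in the final paragraph and replace it with the prefix-sum bookkeeping, which is the right fix; in a clean write-up you should simply delete the symmetrization detour and present the prefix-sum version from the start.
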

	Based on this, we have the following propositions:
	\begin{proposition} \label{computational_complexity_hierarchical_inner_product}
		For two hierarchical matrices $A =\mathcal{H}^{(2)}(y)$ and $A' =\mathcal{H}^{(2)}(y')$ with $y = \{y^{(1)}, \cdots, y^{(m)}\}$ and ${y'} = \{{y'}^{(1)}, \cdots, {y'}^{(m)}\}$, where $y^{(l)}, {y'}^{(l')} \in \C^{CK \times r}$ for $1 \leq l,l' \leq m$, the formula 
		\begin{equation}
		\sum_{i<j\leq K} A_{ij}(k,\kappa) A_{ij}'(k',\kappa'),\quad \forall k,k',\kappa,\kappa'
		\end{equation}
		can be computed with $O(Km^2 r^2)$ complexity. 
	\end{proposition}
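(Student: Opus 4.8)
The plan is to exploit two structural features of the representation \eqref{approx_H2_hiearchy} and thereby collapse the nominally $O(K^2)$ sum over pairs $(i,j)$ into $m^2$ one-dimensional prefix-sum computations. First I would record how a single $C\times C$ block of $A=\cH^{(2)}(y)$ decomposes across levels. Because $c_l$ is an integer multiple of $C$ (indeed $c_l/C = K/2^{l-1}$), the level-$l$ diagonal blocks are aligned with the $C\times C$ grid, so any two cluster indices $i,j$ either both fall in one level-$l$ block or in different ones; write $i\sim_l j$ for the former. Letting $\phi^{(l),k}_i\in\C^{r}$ be the row of $y^{(l)}$ with global index $(i-1)C+k$, one gets
\[
A_{ij}(k,\kappa)=\sum_{l:\,i\sim_l j}\phi^{(l),k}_i\cdot\overline{\phi^{(l),\kappa}_j},\qquad A'_{ij}(k',\kappa')=\sum_{l':\,i\sim_{l'}j}\psi^{(l'),k'}_i\cdot\overline{\psi^{(l'),\kappa'}_j},
\]
with $\psi$ the analogous rows of ${y'}^{(l')}$. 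The second fact I would use is that these partitions are \emph{nested} — level $l+1$ refines level $l$ — so $i\sim_l j$ and $i\sim_{l'}j$ together are equivalent to $i\sim_{\max(l,l')}j$.

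Next, multiply the two expansions and use the identity $(\phi\cdot\overline{\phi'})(\psi\cdot\overline{\psi'})=(\phi\otimes\psi)\cdot\overline{(\phi'\otimes\psi')}$ to obtain, for each fixed tuple $(k,\kappa,k',\kappa')$,
\[
\sum_{i<j\le K}A_{ij}(k,\kappa)A'_{ij}(k',\kappa')=\sum_{l,l'=1}^{m}\ \sum_{\substack{i<j\\ i\sim_{\max(l,l')}j}}u^{(l,l')}_i\cdot\overline{w^{(l,l')}_j},
\]
where $u^{(l,l')}_i:=\phi^{(l),k}_i\otimes\psi^{(l'),k'}_i\in\C^{r^2}$ and $w^{(l,l')}_j:=\phi^{(l),\kappa}_j\otimes\psi^{(l'),\kappa'}_j\in\C^{r^2}$. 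For a fixed pair $(l,l')$ with $L:=\max(l,l')$, the condition $i\sim_L j$ means $i,j$ lie in a common one of the $n_L$ disjoint level-$L$ groups $G_1,\dots,G_{n_L}$ of size $p_L=K/2^{L-1}$, so the inner sum splits as $\sum_{b}\sum_{i<j\in G_b}u_i\cdot\overline{w_j}$.

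For the cost: inside a single group $G_b$, compute the suffix sums $W_i=\sum_{j\in G_b,\,j>i}\overline{w_j}$ and then accumulate $\sum_{i\in G_b}u_i\cdot W_i$; since the $u_i,w_j$ are $r^2$-vectors this is $O(p_L r^2)$ (equivalently, invoke Proposition~\ref{prop:computational_complexity_of_upper triangular}(1) with inner dimension $r^2$ against an all-ones factor). Summing over the disjoint $G_b$ gives $\sum_b p_L r^2 = O(Kr^2)$ per pair $(l,l')$; summing over the $m^2$ level pairs gives $O(Km^2r^2)$; and sweeping over all $(k,\kappa,k',\kappa')\in[C]^4$ only multiplies by the constant $C^4$, which is absorbed. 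This establishes the claim.

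I expect the main obstacle to be the combinatorial bookkeeping of the first two steps rather than any estimate: one must check the $C\times C$-grid alignment of the level partitions, prove the refinement identity $i\sim_l j\wedge i\sim_{l'}j\iff i\sim_{\max(l,l')}j$, and then verify that after fixing $(l,l')$ the surviving index pairs genuinely decouple across the level-$L$ groups, so that the prefix/suffix-sum trick can be applied group by group. Once that structure is in place, the complexity count is the routine telescoping argument above.
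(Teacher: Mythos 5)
Your proof is correct and follows essentially the same route as the paper's: expand the product over the $m^2$ level pairs, observe that for each pair the surviving index set is the block-diagonal support of the finer level, and bound each level pair's contribution by $O(Kr^2)$ via the low-rank upper-triangular inner-product bound of Proposition~\ref{prop:computational_complexity_of_upper triangular}(1), which you reprove inline with the Kronecker/suffix-sum trick. The extra bookkeeping you supply (grid alignment, the nesting identity $i\sim_l j\wedge i\sim_{l'}j\iff i\sim_{\max(l,l')}j$) is exactly what the paper's phrase ``applying Proposition~\ref{prop:computational_complexity_of_upper triangular} (Part 1) for $\max\{n_l,n_{l'}\}$ times'' implicitly relies on.
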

	\begin{proof}
		\begin{eqnarray}
		&\ & \sum_{i<j\leq K} A_{ij}(k,\kappa) A_{ij}'(k',\kappa')\cr 
		&=&\sum^m_{l=1}\sum^m_{l'=1}  \sum_{i<j\leq K}\left(\cH_{ij}^{(2)}(y^{(l)})(k,\kappa) \; \cH_{ij}^{(2)}({y'}^{(l')})(k',\kappa')\right).
		\end{eqnarray}
		For fixed $k$, $k'$, $\kappa$ and $\kappa'$, $[\cH_{ij}^{(2)}(y^{(l)})(k,\kappa)]_{i,j\in [K]}$ and $[\cH_{ij}^{(2)}({y'}^{(l')})(k',\kappa')]_{i,j\in [K]}$ take the form of 
		\begin{equation}
		B = \begin{pmatrix}
		B_1^{(l)} \\
		& B_2^{(l)} \\
		& & \ddots \\
		& & & B_{n_l}^{(l)}
		\end{pmatrix},\quad B' = \begin{pmatrix}
		{B'}_1^{(l')} \\
		& {B'}_2^{(l')} \\
		& & \ddots \\
		& & & {B'}_{n_{l'}}^{(l')}
		\end{pmatrix}
		\end{equation}
		respectively, where the diagonal blocks $B_i^{(l)}\in \mathbb{C}^{K/n_l\times K/n_l}$ and ${B'}_j^{(l')}\in \mathbb{C}^{K/n_{l'}\times K/n_{l'}}$ are rank-$r$ matrices. The complexity of $\sum_{i<j}\left(\cH_{ij}^{(2)}(y^{(l)})(k,\kappa) \; \cH_{ij}^{(2)}({y'}^{(l')})(k',\kappa')\right)$ can thus be obtained by applying Proposition~\ref{prop:computational_complexity_of_upper triangular} (Part 1) for $\max\{n_l,n_{l'}\}$ times, each time having $O(K/\max\{n_l,n_{l'}\}r^2)$ complexity, which gives a total complexity of $O(K r^2)$. Incorporating the double sum $\sum^m_{l=1}\sum^m_{l'=1}$ gives a final complexity of $O(Km^2r^2)$
	\end{proof}

	\begin{proposition} \label{prop:computational_complexity_hierarchical_inner_product_full}
		For a hierarchical matrix $A =\mathcal{H}^{(2)}(y)$ with $y = \{y^{(1)}, \cdots, y^{(m)}\}$ where $y^{(l)} \in \C^{CK \times r}$ for $1 \leq l \leq m$, and a dense matrix $A'\in \mathbb{C}^{CK\times CK}$, the formula 
		\begin{equation}
		\sum_{i<j\leq K} A_{ij}(k,\kappa) A_{ij}'(k',\kappa'),\quad \forall k,k',\kappa,\kappa'
		\end{equation}
		can be computed with $O(K^2 r)$ complexity. 
	\end{proposition}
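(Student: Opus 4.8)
The plan is to mirror the proof of Proposition~\ref{computational_complexity_hierarchical_inner_product}, but since the second matrix $A'$ is now dense rather than hierarchical, I will invoke Part~2 of Proposition~\ref{prop:computational_complexity_of_upper triangular} (low-rank against arbitrary) in place of Part~1, applied level by level so that the per-level costs form a geometric series summing to $O(K^2 r)$.

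First I would fix the four indices $k,\kappa,k',\kappa'\in[C]$ and regard $B:=[A_{ij}(k,\kappa)]_{i,j\in[K]}$ and $B':=[A'_{ij}(k',\kappa')]_{i,j\in[K]}$ as ordinary $K\times K$ matrices. Since $A=\sum_{l=1}^m \cH^{(2)}(y^{(l)})$, linearity gives $B=\sum_{l=1}^m B^{(l)}$ with $B^{(l)}:=[\cH^{(2)}_{ij}(y^{(l)})(k,\kappa)]_{i,j}$. The matrix $\cH^{(2)}(y^{(l)})$ is block diagonal with $n_l=2^{l-1}$ blocks $y^{(l)}_s(y^{(l)}_s)^*$ of size $c_l\times c_l$, $c_l=CK/n_l$; because $K$ is a power of $2$, each such block spans exactly $K/n_l$ consecutive cluster indices, so $B^{(l)}$ is block diagonal with $n_l$ principal blocks $B^{(l)}_s\in\C^{(K/n_l)\times(K/n_l)}$. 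Writing $y^{(l)}_s$ columnwise, the $(i,j)$ entry of $B^{(l)}_s$ equals $\sum_{\rho=1}^r (y^{(l)}_s)_{(i-1)C+k,\rho}\,\overline{(y^{(l)}_s)_{(j-1)C+\kappa,\rho}}$, which exhibits an explicit rank-$\le r$ factorization $B^{(l)}_s=u_s v_s^*$ with $u_s,v_s\in\C^{(K/n_l)\times r}$ obtained by subsampling rows of $y^{(l)}_s$ at offsets $k$ and $\kappa$; these factors are read off at no asymptotic cost.

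Next I would expand $\sum_{i<j\le K} B(i,j)B'(i,j)=\sum_{l=1}^m\sum_{i<j\le K} B^{(l)}(i,j)B'(i,j)$. Since $B^{(l)}$ vanishes outside its diagonal blocks, the inner sum splits as $\sum_{s=1}^{n_l}\sum_{i<j\text{ in block }s} B^{(l)}_s(i,j)\,\tilde B'_s(i,j)$, where $\tilde B'_s$ is the corresponding $(K/n_l)\times(K/n_l)$ principal submatrix of $B'$. Applying Part~2 of Proposition~\ref{prop:computational_complexity_of_upper triangular} with $p=K/n_l$ to $B^{(l)}_s=u_s v_s^*$ and the arbitrary $\tilde B'_s$ costs $O((K/n_l)^2 r)$ per block; over the $n_l$ blocks this is $O(K^2 r/n_l)$ for level $l$, and summing over $l=1,\dots,m$ gives $\sum_{l=1}^m K^2 r\,2^{-(l-1)}\le 2K^2 r=O(K^2 r)$. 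Finally, ranging over all $C^4$ choices of $(k,\kappa,k',\kappa')$ multiplies the total by the constant $C^4$, absorbed since $C$ is constant, which yields the claimed bound.

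The routine parts are the indexing bookkeeping: verifying that each level-$l$ diagonal block of $\cH^{(2)}(y^{(l)})$ aligns with an integral range of $K/n_l$ cluster blocks (using that $K$ is a power of $2$), and that the row subsampling of $y^{(l)}_s$ genuinely produces a rank-$\le r$ factorization of $B^{(l)}_s$. The only point requiring care — and the main thing to get right — is the granularity at which Part~2 of Proposition~\ref{prop:computational_complexity_of_upper triangular} is applied: it must be invoked per diagonal block with $p=K/n_l$, not once globally with $p=K$, so that the per-level costs decrease geometrically and their sum stays $O(K^2 r)$. Beyond this accounting I do not expect a genuine obstacle.
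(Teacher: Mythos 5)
Your proposal is correct and follows essentially the same route as the paper's proof: decompose $A$ level by level into block-diagonal pieces with $n_l$ rank-$r$ blocks of size $K/n_l$, apply Part~2 of Proposition~\ref{prop:computational_complexity_of_upper triangular} per block at cost $O((K/n_l)^2 r)$, and sum the resulting geometric series over levels to get $O(K^2 r)$. Your explicit row-subsampling factorization of each block and the warning about per-block (rather than global) application of Part~2 are just more detailed renderings of steps the paper's proof takes implicitly.
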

	\begin{proof}
		\begin{eqnarray}
		&\ & \sum_{i<j\leq K} A_{ij}(k,\kappa) A_{ij}'(k',\kappa') \cr
		&=&\sum^m_{l=1}  \sum_{i<j\leq K}\left(\cH_{ij}^{(2)}(y^{(l)})(k,\kappa) \; A_{ij}'(k',\kappa')\right).
		\end{eqnarray}
		For fixed $k$ and $\kappa$, $[\cH_{ij}^{(2)}(y^{(l)})(k,\kappa)]_{i,j\in [K]}$ takes the form of 
		\begin{equation}
		B = \begin{pmatrix}
		B_1^{(l)} \\
		& B_2^{(l)} \\
		& & \ddots \\
		& & & B_{n_l}^{(l)}
		\end{pmatrix},
		\end{equation}
		with some matrices $B^{(l)}_i \in \mathbb{C}^{K/n_l \times K/n_l}$ each having rank $r$. With this form, the complexity of \\ $\sum_{i<j\leq K} \cH_{ij}^{(2)}(y^{(l)})(k,\kappa) \; A_{ij}'(k',\kappa')$ can be determined by applying Proposition~\ref{prop:computational_complexity_of_upper triangular} (Part 2) $n_l$ times, each time having complexity $O(rK^2/n_l^2 )$, which gives a total complexity of $O(rK^2/n_l)$. Summing this complexity $\sum^m_{l=1}O(rK^2/n_l)$, we arrive at a final complexity of $O(K^2r)$.
	\end{proof}

	\bibliographystyle{plain}
	\bibliography{ref2}

\begin{thebibliography}{10}

\bibitem{altland2010condensed}
A.~Altland and B.~D. Simons.
\newblock {\em Condensed matter field theory}.
\newblock Cambridge university press, 2010.

\bibitem{an1988note}
G.~An.
\newblock A note on the cluster variation method.
\newblock {\em Journal of Statistical Physics}, 52(3-4):727--734, 1988.

\bibitem{2rdm_1}
J.~S.~M. Anderson, M.~Nakata, R.~Igarashi, K.~Fujisawa, and M.~Yamashita.
\newblock The second-order reduced density matrix method and the
  two-dimensional hubbard model.
\newblock {\em Computational and Theoretical Chemistry}, 1003:22--27, 2013.

\bibitem{alm_intro}
D.~P. Bertsekas.
\newblock {\em Constrained optimization and Lagrange multiplier methods}.
\newblock Academic press, 2014.

\bibitem{hierarchical_matrix1}
S.~B{\"o}rm, L.~Grasedyck, and W.~Hackbusch.
\newblock Introduction to hierarchical matrices with applications.
\newblock {\em Eng Anal Bound Elem}, 27(5):405--422, 2003.

\bibitem{Manopt}
N.~Boumal, B.~Mishra, P.-A. Absil, and R.~Sepulchre.
\newblock Manopt, a matrix toolbox for optimization on manifolds.
\newblock {\em J. Mach. Learn. Res.}, 15(42):1455--1459, 2014.

\bibitem{BM-factorization}
S.~Burer and R.~Monteiro.
\newblock A nonlinear programming algorithm for solving semidefinite programs
  via low-rank factorization.
\newblock {\em Math. Program., Ser. B}, 95:329--357, 2003.

\bibitem{cipra1987introduction}
B.~A. Cipra.
\newblock An introduction to the ising model.
\newblock {\em The American Mathematical Monthly}, 94(10):937--959, 1987.

\bibitem{2rdm_3}
A.~E. DePrince and D.~A. Mazziotti.
\newblock Exploiting the spatial locality of electron correlation within the
  parametric two-electron reduced-density-matrix method.
\newblock {\em J. Chem. Phys.}, 132:034110, 2010.

\bibitem{CVX}
M.~Grant and S.~Boyd.
\newblock {\em CVX: Matlab software for disciplined convex programming, version
  2.0 beta}, 2013.

\bibitem{gradeqloss}
A.~Griewank and A.~Walther.
\newblock {\em Evaluating Derivatives: Principles and Techniques of Algorithmic
  Differentiation}.
\newblock Society for Industrial and Applied Mathematics, 2008.

\bibitem{variational_embedding_pd_algorithm}
Y.~Khoo and M.~Lindsey.
\newblock Scalable semidefinite programming approach to variational embedding
  for quantum many-body problems.
\newblock {\em arXiv:2106.02682}, 2021.

\bibitem{lasserre2001global}
J.~B. Lasserre.
\newblock Global optimization with polynomials and the problem of moments.
\newblock {\em SIAM Journal on optimization}, 11(3):796--817, 2001.

\bibitem{variational_embedding_qmb}
L.~Lin and M.~Lindsey.
\newblock Variational embedding for quantum many-body problems.
\newblock {\em Comm. Pure Appl. Math.}, 75:2033--2068, 2022.

\bibitem{mao2007wireless}
G.~Mao, B.~Fidan, and B.~DO. Anderson.
\newblock Wireless sensor network localization techniques.
\newblock {\em Computer networks}, 51(10):2529--2553, 2007.

\bibitem{nie2014optimality}
J.~Nie.
\newblock Optimality conditions and finite convergence of {L}asserre’s
  hierarchy.
\newblock {\em Mathematical programming}, 146(1-2):97--121, 2014.

\bibitem{ozyesil2017survey}
O.~Ozyesil, V.~Voroninski, R.~Basri, and A.~Singer.
\newblock A survey of structure from motion.
\newblock {\em arXiv preprint arXiv:1701.08493}, 2017.

\bibitem{parrilo2003semidefinite}
P.~A. Parrilo.
\newblock Semidefinite programming relaxations for semialgebraic problems.
\newblock {\em Mathematical programming}, 96(2):293--320, 2003.

\bibitem{pelizzola2005cluster}
A.~Pelizzola.
\newblock Cluster variation method in statistical physics and probabilistic
  graphical models.
\newblock {\em Journal of Physics A: Mathematical and General}, 38(33):R309,
  2005.

\bibitem{tfi}
P.~Pfeuty.
\newblock The one-dimensional ising model with a transverse field.
\newblock {\em Annals of Physics}, 57(1):79--90, 1970.

\bibitem{alm1}
D.~Sun, K.-C. Toh, and L.~Yang.
\newblock A convergent 3-block semiproximal alternating direction method of
  multipliers for conic programming with 4-type constraints.
\newblock {\em SIAM journal on Optimization}, 25:882--915, 2015.

\bibitem{alm2}
D.~Sun, K.-C. Toh, Y.~Yuan, and X.-Y. Zhao.
\newblock Sdpnal+: A matlab software for semidefinite programming with bound
  constraints (version 1.0).
\newblock {\em Optimization Methods and Software}, 35:87--115, 2020.

\bibitem{wainwright2008graphical}
M.~J. Wainwright and M.~I. Jordan.
\newblock {\em Graphical models, exponential families, and variational
  inference}.
\newblock Now Publishers Inc, 2008.

\bibitem{admm_penalty_adjust}
B.~Wohlberg.
\newblock Admm penalty parameter selection by residual balancing.
\newblock {\em arXiv:1704.06209}, 2017.

\end{thebibliography}
\end{document}